\newtheorem{thm}{Theorem}[section]
\newtheorem{prop}[thm]{Proposition}
\newtheorem{lem}[thm]{Lemma}
\newtheorem{cor}[thm]{Corollary}
\theoremstyle{definition}
\newtheorem{defn}[thm]{Definition}
\newtheorem{rem}[thm]{Remark}
\newtheorem{exmp}[thm]{Example}
\renewcommand{\bar}[1]{\overline{#1}}
\newcommand{\boundary}{\partial}
\newcommand{\bigabs}[1]{\bigl|{#1}\bigr|}
\newcommand{\bigset}[2]{ \bigl\{ \, {#1} \bigm| {#2} \, \bigr\} }
\renewcommand{\setminus}{-}
\newcommand{\field}[1]{\mathbb{#1}}
\newcommand{\Z}{\field{Z}}
\newcommand{\R}{\field{R}}
\newcommand{\N}{\field{N}}
\newcommand{\of}{\circ}
\DeclareMathOperator{\CAT}{CAT}
\DeclareMathOperator{\diam}{diam}
\newcommand{\Set}[1]{\mathcal{#1}}
\newcommand{\showcomments}{yes}
\newsavebox{\commentbox}
\begin{document}

\title{Hyperbolic groups and local connectivity}

\author[G.C.~Hruska]{G.~Christopher Hruska}
\address{Department of Mathematical Sciences\\
         University of Wisconsin--Milwaukee\\
         PO Box 413\\
         Milwaukee, WI 53211\\
	 USA}
\email{chruska@uwm.edu}

\author[K.~Ruane]{Kim Ruane}
\address{Department of Mathematics\\
         Tufts University\\
         Medford, MA 02155\\
	 USA}
\email{kim.ruane@tufts.edu}

\begin{abstract}
The goal of this paper is to give an exposition of some results of Bestvina--Mess on local connectivity of the boundary of a one-ended word hyperbolic group.  We also give elementary proofs that all hyperbolic groups are semistable at infinity and their boundaries are linearly connected in the one-ended case.  Geoghegan first observed that semistability at infinity is a consequence of local connectivity using ideas from shape theory, and Bonk--Kleiner proved linear connectivity using analytical methods.
The methods in this paper are closely based on the original ideas of Bestvina--Mess.
\end{abstract}

\dedicatory{Dedicated to Mike Mihalik on his 70th birthday.}

\keywords{Hyperbolic group, semistable, locally connected, linearly connected, boundary}

\subjclass{20F67, 57M07}

\date{\today}

\maketitle

\section{Introduction}
\label{sec:Introduction}

In the 1980s, Gromov introduced the notion of a word hyperbolic group and showed that each hyperbolic group can be compactified by attaching a certain boundary at infinity, known as the Gromov boundary \cite{Gromov87}.
Shortly afterward, Mess and Mihalik conjectured that the Gromov boundary of a one-ended hyperbolic group is always locally connected.

In a seminal article, Bestvina--Mess show that the boundary of a one-ended hyperbolic group is locally connected whenever it does not contain a cut point \cite{BestvinaMess91}.  
Subsequent work of Bowditch, Levitt, and Swarup shows that the boundary can never contain a cut point \cite{Bowditch99Treelike,Levitt98,Swarup96}, which establishes that the conjecture of Mess and Mihalik is true, \emph{i.e.}, the boundary is always locally connected.

Geoghegan observed that local connectivity implies that every hyperbolic group is semistable at infinity, using a deep theorem of shape theory due to Krasinkiewicz (see \cite{Krasinkiewicz77}).
Semistability is a foundational property central to the study of the fundamental group at infinity (see \cite{HruskaRuane_Semistable} for more background).
It has been conjectured by Geoghegan and Mihalik that every finitely presented group is semistable at infinity.

Additionally, Bonk--Kleiner use analytical methods to show that every hyperbolic group that is not virtually free contains a quasi-isometrically embedded copy of the hyperbolic plane in \cite{BonkKleiner05}. In order to establish this result, Bonk--Kleiner show that the boundary of a one-ended hyperbolic group is linearly connected, a stronger quantitative version of local connectivity that is often useful for the analytical study of boundaries.

The goal of this paper is to provide a detailed exposition of the proofs of \cite[\S 3]{BestvinaMess91} and to give simple, elementary proofs of semistability and linear connectivity using the same methods.
The essential ingredient in \cite{BestvinaMess91} is a geometric property known as $(\ddag_M)$, which may or may not hold in the Cayley graph of $G$. If this property holds for some $M$, one can locally push paths outward away from a basepoint, leading directly to both semistability and linear connectivity.  Surprisingly, if $(\ddag_M)$ were to fail for any $M>0$, then the boundary would contain a cut point, which is known to be false by the Cut Point Theorem of \cite{Bowditch99Treelike,Levitt98,Swarup96}.

In \cite{Bowditch99Boundaries}, Bowditch has extended some of the ideas related to $(\ddag_M)$ from the hyperbolic setting to the relatively hyperbolic setting.  This extension is a key step in his work on local connectivity of boundaries of relatively hyperbolic groups \cite{Bowditch99Connectedness,Bowditch01}.

Section~\ref{sec:Hyperbolic} is a review of hyperbolic spaces and their boundaries.
In Section~\ref{sec:CutPoints}, we introduce the $(\ddag_M)$ condition and use the Cut Point Theorem to show that the Cayley graph of any one-ended hyperbolic group satisfies $(\ddag_M)$ for any $M>0$.
Section~\ref{sec:Semistable} contains an elementary proof that all hyperbolic groups are semistable at infinity.
In Section~\ref{sec:LinearCon}, we use similar methods to establish the linear connectivity of the boundary of a one-ended hyperbolic group by a short, elementary argument.  We conclude by sketching the proof of Bonk--Kleiner's quasi-hyperbolic plane theorem, using linear connectivity.

\subsection{Acknowledgements}
The authors are grateful to Bob Bell and Craig Guilbault for helpful feedback. The authors also thank the referee for many insightful comments that helped to improve the exposition of this paper.
The first author was partially supported by grant \#714338 from the Simons Foundation.

\section{$\delta$--hyperbolic spaces}
\label{sec:Hyperbolic}

In this section, we summarize some well-known background on the structure of $\delta$--hyperbolic spaces and their boundaries.  We refer the reader to any of the texts \cite{GhysHarpe90,CDP90,BH99,BuyaloSchroeder07} for a more detailed introduction.

\begin{defn}[Hyperbolic spaces]
\label{def:DeltaHyperbolic}
A metric space is a \emph{geodesic space} if each pair of points in $X$ is joined by a geodesic segment.
If $p,x,y$ are points of a geodesic space $X$, the \emph{Gromov product} is the quantity
\[
   (x | y)_p = \frac{1}{2}
   \bigl( d(x,p) + d(y,p) - d(x,y) \bigr).
\]
To understand the geometric significance of the Gromov product, consider a geodesic triangle with vertices $x$, $y$, and $z$, and consider the quantities
\[
   a = (y | z)_x,
   \qquad
   b = (x | z)_y,
   \qquad \text{and} \qquad
   c = (x | y)_z.
\]
A direct calculation using the definition of the Gromov product reveals that
\[
   a + b = d(x,y),
   \qquad
   a + c = d(x,z),
   \qquad \text{and} \qquad
   b + c = d(y,z).
\]
A geodesic space $X$ is \emph{$\delta$--hyperbolic} for some $\delta \ge 0$ if for any geodesic triangle with vertices $x,y,z$ in $X$ we have the following.
If $y'\in [x,y]$ and $z'\in [x,z]$ are points with $d(x,y') = d(x,z') \le (y | z)_x$, then $d(y',z')\le \delta$.

Another way to understand this definition is as follows: if we collapse the triangle onto a tripod with legs of length $a$, $b$, and $c$, then the preimage of any point in the tripod is a set of diameter at most $\delta$.
Note that, in a $\delta$--hyperbolic space, each side of any triangle lies in the $\delta$--neighborhood of the union of the other two sides.
\end{defn}

Gromov's hyperbolicity inequality states the following.

\begin{prop}[\cite{BuyaloSchroeder07}, Prop.~2.1.2]
\label{prop:GromovInequality}
If $X$ is $\delta$--hyperbolic, then
\[
   ( x | y)_p \ge \min \bigl\{ (x| z)_p, (z| y)_p \bigr\} - \delta
\]
for any base point $p \in X$ and any $x,y,z\in X$.
\end{prop}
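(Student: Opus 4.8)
The plan is to reduce the inequality to two applications of the thin-triangles condition from Definition~\ref{def:DeltaHyperbolic}, using the base point $p$ as the common apex of two geodesic triangles. Write $t = \min\bigl\{(x|z)_p, (z|y)_p\bigr\}$. The first thing I would check is that $t$ is small enough to be realized along the relevant geodesics: since $d(x,z) \ge \abs{d(p,x) - d(p,z)}$, a direct computation gives $(x|z)_p \le \min\{d(p,x), d(p,z)\}$, and similarly for $(z|y)_p$, so that $t \le \min\{d(p,x), d(p,y), d(p,z)\}$. Hence I may choose points $x_1 \in [p,x]$, $y_1 \in [p,y]$, and $z_1 \in [p,z]$ each at distance exactly $t$ from $p$.

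Next I would apply the thin-triangles condition to the geodesic triangle with vertices $p$, $x$, $z$, taking $p$ as the apex. Since $d(p, x_1) = d(p, z_1) = t \le (x|z)_p$, the condition yields $d(x_1, z_1) \le \delta$. Applying it identically to the triangle on $p$, $z$, $y$ gives $d(z_1, y_1) \le \delta$, where I use that the point at distance $t$ from $p$ along $[p,z]$ is the same in both applications. The triangle inequality then bounds $d(x_1, y_1) \le 2\delta$.

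Finally I would convert this into a lower bound on $(x|y)_p$. Because $x_1$ and $y_1$ lie on the geodesics $[p,x]$ and $[p,y]$ at distance $t$ from $p$, we have $d(x, x_1) = d(p,x) - t$ and $d(y, y_1) = d(p,y) - t$, so the triangle inequality $d(x,y) \le d(x,x_1) + d(x_1,y_1) + d(y_1,y)$ gives $d(x,y) \le d(p,x) + d(p,y) - 2t + 2\delta$. Substituting into the definition of the Gromov product collapses this to $(x|y)_p \ge t - \delta$, which is the claim.

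The computations here are entirely routine; the only point requiring care is the bookkeeping of constants. Applying thinness twice naively suggests a loss of $2\delta$, and it is worth emphasizing that the factor of $\tfrac{1}{2}$ in the definition of the Gromov product is exactly what absorbs one of these, leaving the sharp constant $\delta$ in the statement. A secondary subtlety, easily overlooked, is verifying that $t$ does not exceed the lengths of the geodesics, which is why I would record the elementary bound $(x|z)_p \le \min\{d(p,x), d(p,z)\}$ at the outset.
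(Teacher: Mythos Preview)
Your argument is correct. The paper does not supply its own proof of this proposition; it is quoted from \cite[Prop.~2.1.2]{BuyaloSchroeder07} without argument, so there is nothing to compare against. Your reduction to two applications of the thin-triangles condition of Definition~\ref{def:DeltaHyperbolic} is the standard one, and the bookkeeping---in particular the observation that the factor $\tfrac{1}{2}$ in the Gromov product absorbs one of the two $\delta$'s---is handled cleanly. One small remark: when you apply thinness to the two triangles sharing the side $[p,z]$, you implicitly fix a single geodesic from $p$ to $z$ and use it in both; this is of course fine, but in a space where geodesics need not be unique it is worth being explicit that the same segment is used, since otherwise the two points called $z_1$ could in principle differ.
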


We first introduce the Gromov boundary as a set.

\begin{defn}[The set $\bar{X}=X \cup \boundary X$]
Geodesic rays $c,c' \colon [0,\infty) \to X$  are \emph{equivalent} if $d \bigl( c(t),c'(t) \bigr)$ remains bounded as $t \to \infty$.
The \emph{boundary} $\boundary X$ is the set of all equivalence classes of geodesic rays in $X$, and we let $c(\infty) \in \boundary X$ denote the equivalence class of the ray $c$.

We may identify $\bar{X} = X \cup \boundary X$ with the set of all equivalence classes of generalized rays, defined as follows.
A \emph{generalized ray} in $X$ is a geodesic $c\colon I \to X$, where $I$ is either an interval $[0,L]$ for some $L > 0$ or $[0,\infty)$.
In the case when $I=[0,L]$ we extend $c$ to a map $[0,\infty) \to X$ by setting $c(t)=c(L)$ for all $t \ge L$, and we refer to these as ``eventually constant'' generalized rays. The point $c(L)$ is the \emph{endpoint} of $c$ and will be denoted by $c(\infty)$.
Eventually constant generalized rays are \emph{equivalent} if they have the same endpoint.
A geodesic ray and an eventually constant generalized ray are never equivalent.
\end{defn}

\begin{defn}[The topology on $\bar{X}$]
\label{defn:BoundaryTopology}
Let $X$ be a proper $\delta$--hyperbolic space with basepoint $p \in X$.
Let $D > 2\delta$ be a fixed constant.
For each geodesic ray $c_0$ in $X$ with $c_0(0)=p$ we define $V(c_0,r,D)$ to be the set of equivalence classes of generalized rays that have a representative $c$
such that $c(0)=p$ and $d\bigl( c(r),c_0(r) \bigr) < D$.

According to \cite[Lem.~III.H.3.6]{BH99}, 
the sets $V(c_0,r,D)$ determine a topology as follows.
For each eventually constant generalized ray $c_0$ based at $p$, consider the family $\mathcal{V}_{c_0}$ of metric balls about $c_0(\infty)$.
For each geodesic ray $c_0$ based at $p$ and each $D>2\delta$, consider the family $\mathcal{V}_{c_0}$ of sets $V(c_0,r,D)$ for $r<\infty$.
The systems of sets $\mathcal{V}_{c_0}$ determine a topology on $\bar{X}$ in which each $\mathcal{V}_{c_0}$ is a neighborhood base at $c_0(\infty)$.
We caution the reader that the basic neighborhoods $V(c_0,r,D)$ are not necessarily open in this topology.

The topology on $\bar{X}$ does not depend on the chosen basepoint or the choice of constant $D>2\delta$.
We refer the reader to \cite[\S III.H.3]{BH99} for more details and proofs of the above assertions.
\end{defn}

Each basic neighborhood $V(c_0,r,D)$ in $\bar{X}$ is path connected. Indeed, choose a pair of generalized rays $c,c'$ based at $p$ that intersect the ball of radius $D$ about $c_0(r)$.  Then $c(\infty)$ can be joined to $c'(\infty)$ by a piecewise geodesic path
\[
   c\bigl( [r,\infty] \bigr) \cup
   \bigl[ c(r),c_0(r) \bigr] \cup 
   \bigl[ c_0(r), c'(r) \bigr] \cup
   c'\bigl( [r,\infty] \bigr)
\]
that lies inside $V(c_0,r,D)$.

\begin{defn}[Radial paths]
\label{defn:RadialPaths}
For each neighborhood $V(c_0,r,D)$ the paths described above will be referred to as \emph{radial paths}.  We have seen that any two points of $V(c_0,r,D)$ are joined by a radial path.
\end{defn}

\begin{defn}[Extended Gromov product]
\label{def:ExtendedGromov}
Let $X$ be a proper $\delta$--hyperbolic space with basepoint $p\in X$.  The Gromov product extends to $\bar{X}$ by defining
\[
   (x | y)_p = \inf \liminf_{i,j\to\infty} (x_i | y_j)_p,
\]
where the infimum is taken over all sequences $(x_i)$ and $(y_j)$ converging to $x$ and $y$ respectively.
For any sequences $x_i \to x$ and $y_i \to y$, we have
\[
   ( x | y )_p \le \liminf (x_i | y_i)_p \le \limsup (x_i | y_i)_p \le (x | y)_p + 2 \delta.
\]
See \cite[Lem.~2.2.2]{BuyaloSchroeder07}.
\end{defn}

\begin{lem}
\label{lem:TripodIdeal}
Let $X$ be a proper $\delta$--hyperbolic space with a basepoint $p$. Choose rays $c$ and $c'$ based at $p$.
For any $s \le \bigl(c(\infty) \big| c'(\infty) \bigr)_p$ we have $d \bigl( c(s),c'(s) \bigr) \le \delta$.
\end{lem}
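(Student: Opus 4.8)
The plan is to reduce the statement to the finite thin-triangle condition of Definition~\ref{def:DeltaHyperbolic}, applied to ordinary geodesic triangles with one vertex at $p$, and then to pass to a limit. The key observation that makes this work is that the restrictions $c|_{[0,t]}$ and $c'|_{[0,t]}$ are themselves geodesic segments, from $p$ to $c(t)$ and from $p$ to $c'(t)$ respectively, so they may legitimately serve as two sides of a genuine geodesic triangle to which the definition applies.

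First I would treat the strict case $s < \bigl( c(\infty) \big| c'(\infty) \bigr)_p$. Applying the inequality recorded in Definition~\ref{def:ExtendedGromov} to the sequences $c(i) \to c(\infty)$ and $c'(i) \to c'(\infty)$ gives
\[
   s < \bigl( c(\infty) \big| c'(\infty) \bigr)_p
   \le \liminf_{i\to\infty} \bigl( c(i) \big| c'(i) \bigr)_p ,
\]
so for all large $i$ one has $\bigl( c(i) \big| c'(i) \bigr)_p > s$; choosing such an $i$ with $i \ge s$ and setting $t = i$ produces a value $t \ge s$ with $s \le \bigl( c(t) \big| c'(t) \bigr)_p$. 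Now I would form the geodesic triangle with vertices $p$, $c(t)$, $c'(t)$, taking $c|_{[0,t]}$ and $c'|_{[0,t]}$ as two of its sides. The points $c(s)$ and $c'(s)$ lie on these two sides at common distance $s = d\bigl( p, c(s) \bigr) = d\bigl( p, c'(s) \bigr) \le \bigl( c(t) \big| c'(t) \bigr)_p$ from the vertex $p$, so the thin-triangle condition with $x = p$ yields $d\bigl( c(s), c'(s) \bigr) \le \delta$ immediately.

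Finally I would dispose of the boundary case $s = \bigl( c(\infty) \big| c'(\infty) \bigr)_p$ by continuity. The estimate $d\bigl( c(s'), c'(s') \bigr) \le \delta$ holds for every $s' < s$ by the previous paragraph, and since $c$ and $c'$ are geodesics, hence continuous, letting $s' \to s^-$ gives $d\bigl( c(s), c'(s) \bigr) \le \delta$ as well. (When $c(\infty) = c'(\infty)$ the product is $+\infty$, and every finite $s$ is covered by the strict case, so no separate argument is needed there.)

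I expect the only genuine subtlety to be the correct invocation of the extended Gromov product: one must convert its definition as an infimum of double limits inferior into a usable lower bound on the ordinary products $\bigl( c(t) \big| c'(t) \bigr)_p$ taken along the particular sequences running out to infinity along $c$ and $c'$. This is exactly what the inequality in Definition~\ref{def:ExtendedGromov} supplies, and once it is in hand the remainder is a direct application of the thin-triangle condition together with a routine continuity argument for the equality case.
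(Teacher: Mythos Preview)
Your proof is correct and follows essentially the same approach as the paper's: both reduce to the finite thin-triangle condition by using the inequality $(c(\infty)\mid c'(\infty))_p \le \liminf_t (c(t)\mid c'(t))_p$ from Definition~\ref{def:ExtendedGromov} to produce a large $t$ with $(c(t)\mid c'(t))_p$ close to or exceeding $s$. The only organizational difference is that the paper runs a single $\epsilon$-argument covering both the strict and equality cases simultaneously, whereas you handle the strict inequality first and then close up the boundary case by continuity of the rays; the underlying mechanism is the same.
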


\begin{proof}
For any $\epsilon>0$, there exists $T$ so that $s \le \bigl( c(T) \big| c'(T) \bigr)_p + \epsilon$.
By the definition of hyperbolicity, setting $s_0 = \bigl( c(T) \big| c'(T) \bigr)_p$ we have
\[
   d \bigl( c(s_0),c'(s_0) \bigr) \le \delta.
\]
There are two cases:
If $s\le s_0$ then $d \bigl( c(s),c'(s) \bigr) \le \delta$ by hyperbolicity.
Otherwise, $s \in [s_0, s_0 + \epsilon)$, so
\[
   d \bigl( c(s),c'(s) \bigr) \le \delta + 2\epsilon.
\]
Taking $\epsilon \to 0$, we get
$d \bigl( c(s),c'(s) \bigr) \le \delta$.
\end{proof}

\begin{rem}[Slim ideal triangles]
\label{rem:SlimIdeal}
A triangle is \emph{ideal} if one or more sides is either a ray or a bi-infinite line. If $X$ is $\delta$--hyperbolic, then each side of an ideal triangle lies in the $\delta_\infty$--neighborhood of the union of the other two sides, for some constant $\delta_\infty$ depending only on $\delta$.  See \cite[Prop.~2.2.2]{CDP90}.
\end{rem}

\begin{defn}[Visual metric]
\label{def:VisualMetric}
The boundary of a proper $\delta$--hyperbolic space $X$ admits a metric $d_\infty$ with the following property: there exists a parameter $a>1$ and constants $k_1,k_2>0$ such that
\[
   k_1 \, a^{-(\zeta | \xi)_p}
   \le d_\infty (\zeta,\xi) \le
   k_2 \, a^{-(\zeta|\xi)_p}
\]
holds for all $\xi,\zeta \in \boundary X$. (See \cite[Thm.~2.2.7]{BuyaloSchroeder07}.)
Such a metric $d_\infty$ is called a \emph{visual metric with parameter $a$} on $\boundary X$. 
\end{defn}

In the following proposition we relate the distance between certain points on a pair of rays on a given sphere with the distance between their endpoints in the visual metric.   This proposition will be used in the proof of Theorem~\ref{thm:LinearlyConnected}.

\begin{prop}
\label{prop:Uniformity}
Let $X$ be a $\delta$--hyperbolic space with basepoint $p$.
Let $d_\infty$ be a visual metric with parameter $a$ on $\boundary X$, and let $D$ be any fixed constant.
Then there exists a constant $k_3 = k_3(D)$ such that the following holds.
Choose any pair of geodesic rays $c,c'$ based at $p$.
For any $r>0$,
\[
   \text{if} \quad
   d \bigl( c(r),c'(r) \bigr) < D \quad
   \text{then} \quad
   d_\infty \bigl( c(\infty),c'(\infty) \bigr) < k_3 \, a^{-r}.
\]
\end{prop}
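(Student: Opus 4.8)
The plan is to convert the statement into a lower bound on the Gromov product of the two endpoints at infinity. By the definition of the visual metric (Definition~\ref{def:VisualMetric}), we have
\[
   d_\infty\bigl(c(\infty),c'(\infty)\bigr) \le k_2\, a^{-\left(c(\infty)\,\middle|\,c'(\infty)\right)_p},
\]
so it suffices to produce a constant $C = C(D,\delta)$, independent of $c$, $c'$, and $r$, with $\bigl(c(\infty)\bigm|c'(\infty)\bigr)_p > r - C$ whenever $d\bigl(c(r),c'(r)\bigr) < D$. Since $a > 1$, this yields $d_\infty\bigl(c(\infty),c'(\infty)\bigr) < k_2\, a^{C}\, a^{-r}$, and we may then take $k_3 = k_2\, a^{C}$.

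First I would record what the hypothesis says about the \emph{finite} Gromov product. Because $c(r)$ and $c'(r)$ both lie at distance $r$ from $p$, a direct computation gives
\[
   \bigl(c(r)\bigm|c'(r)\bigr)_p = r - \tfrac{1}{2}\, d\bigl(c(r),c'(r)\bigr),
\]
so the hypothesis $d\bigl(c(r),c'(r)\bigr) < D$ is exactly the statement $\bigl(c(r)\bigm|c'(r)\bigr)_p > r - D/2$. In particular $\bigl(c(r)\bigm|c'(r)\bigr)_p \le r$ always holds, a fact I will use below.

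The heart of the argument is a monotonicity estimate: I claim that for every $t \ge r$,
\[
   \bigl(c(t)\bigm|c'(t)\bigr)_p \ge \bigl(c(r)\bigm|c'(r)\bigr)_p - 2\delta.
\]
To see this, note that since $c$ is a geodesic ray and $r \le t$, the point $c(r)$ lies on the segment $[p,c(t)]$, so $\bigl(c(t)\bigm|c(r)\bigr)_p = r$, and likewise $\bigl(c'(t)\bigm|c'(r)\bigr)_p = r$. Applying Gromov's inequality (Proposition~\ref{prop:GromovInequality}) twice --- once to the triple $c(t),c(r),c'(t)$ and once to the triple $c(r),c'(r),c'(t)$ --- and using $\bigl(c(r)\bigm|c'(r)\bigr)_p \le r$, the two minima collapse to $\bigl(c(r)\bigm|c'(r)\bigr)_p$ and the claim follows with a total loss of $2\delta$.

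Finally I would pass to the boundary. Taking $t = i \to \infty$ in the claim shows $\limsup_i \bigl(c(i)\bigm|c'(i)\bigr)_p \ge \bigl(c(r)\bigm|c'(r)\bigr)_p - 2\delta$, and the comparison inequalities for the extended Gromov product (Definition~\ref{def:ExtendedGromov}) then give $\bigl(c(\infty)\bigm|c'(\infty)\bigr)_p \ge \bigl(c(r)\bigm|c'(r)\bigr)_p - 4\delta > r - D/2 - 4\delta$. Thus $C = D/2 + 4\delta$ works, and $k_3 = k_2\, a^{D/2 + 4\delta}$. The step I expect to require the most care is keeping the directions of the estimates straight: the visual metric bound needs a \emph{lower} bound on the boundary Gromov product, which forces me to propagate the finite estimate \emph{outward} to all $t \ge r$ rather than inward, and then to absorb the inevitable $2\delta$ gaps --- arising both from Gromov's inequality and from the finite-to-infinite comparison --- into the single constant $C$.
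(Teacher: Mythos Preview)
Your proof is correct and follows essentially the same route as the paper: both compute $\bigl(c(r)\bigm|c'(r)\bigr)_p > r - D/2$, apply Gromov's inequality twice using $\bigl(c(t)\bigm|c(r)\bigr)_p = \bigl(c'(t)\bigm|c'(r)\bigr)_p = r$ to pass to large $t$, and then invoke the $2\delta$ comparison for the extended Gromov product, arriving at the identical constant $k_3 = k_2\,a^{D/2+4\delta}$. The only difference is cosmetic---you phrase the middle step as a monotonicity claim valid for all $t\ge r$ before taking $\limsup$, whereas the paper chooses a single large $T$ from the $\limsup$ and runs the inequality chain there.
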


\begin{proof}
Given a pair of rays $c,c'$ based at $p$ and a constant $r>0$,
let $\xi=c(\infty)$ and $\xi'=c'(\infty)$.
The definition of the Gromov product implies that
\[
   \bigl( c(u) \big| c(v) \bigr)_p = \min \{u,v\}
\]
for any $u,v \in [0,\infty)$.

We will show that
\[
   \bigl( c(r) \big| c'(r) \bigr)_p > r - D/2.
\]
Consider the quantities:
\[
   a = \bigl( p \big| c'(r) \bigr)_{c(r)}
   \qquad\quad
   b = \bigl( p \big| c(r) \bigr)_{c'(r)}
   \qquad\quad
   s = \bigl( c(r) \big| c'(r) \bigr)_p
\]
Definition~\ref{def:DeltaHyperbolic} shows that
$a+s = r = b+s$, which implies $a=b$. Therefore,
$2a = a+b = d\bigl( c(r),c'(r) \bigr) <D$.
We conclude that
\[
   s = r-a > r - D/2.
\]
For any $\epsilon >0$ there exists $T>r$ such that
\begin{align*}
   ( \xi | \xi' )_p
   &\ge \limsup_{t\to\infty} \bigl( c(t) \big| c'(t) \bigr)_p -2 \delta
   \ge \bigl( c(T) \big| c'(T) \bigr)_p - 2\delta - \epsilon \\
   &\ge \min \bigl\{ \bigl(c(T) \big| c(r) \bigr)_p,
   \bigl(c(r) | c'(r) \bigr)_p, \bigl(c'(r) \big| c'(T) \bigr)_p \bigr\} -4\delta-\epsilon \\
   &\ge r - (D/2) -4\delta - \epsilon,
\end{align*}
where the third inequality follows by applying Proposition~\ref{prop:GromovInequality} twice.
Taking $\epsilon \to 0$, we see that
\[ 
( \xi | \xi')_p \ge r-(D/2)-4\delta.
\]
Therefore, we have
\[
   d_\infty ( \xi,\xi')
   \le k_2\, a^{-( \xi | \xi')_p}
   \le k_2\, a^{(D/2)+4\delta} a^{-r}.  \qedhere
\]
\end{proof}

\section{Separating horoballs and global cut points}
\label{sec:CutPoints}

This section introduces the property $(\ddag_M)$ and gives an elaboration of the proof of \cite[Prop.~3.3]{BestvinaMess91}.  In order to create paths in the boundary of a one-ended hyperbolic group $G$, Bestvina--Mess introduce $(\ddag_M)$, which may or may not hold in the Cayley graph of $G$. This property allows one to locally push paths outward away from a basepoint.  It turns out that the obstruction to $(\ddag_M)$ is the existence of a separating horoball in the Cayley graph.  Developing this idea, Bestvina--Mess show that if $(\ddag_M)$ fails to hold for any $M>0$, then the boundary must contain a cut point.
Recall that a \emph{cut point} of a connected topological space $Z$ is a point $p\in Z$ such that $Z - \{p\}$ is disconnected.
Subsequently, it was shown that the boundary of a one-ended hyperbolic group can never contain a cut point, which implies that $(\ddag_M)$ holds for every $M>0$.

\begin{defn}[Horoballs]
\label{def:Horoballs}
Let $X$ be a proper $\delta$--hyperbolic space, and let $\delta_\infty$ be the constant given by Remark~\ref{rem:SlimIdeal}.
A function $h \colon X \to \R$ is a \emph{horofunction} about $\eta\in \boundary X$ if the following condition holds:
There exists a constant $K<\infty$ such that if $a,b \in X$ and $d\bigl( b, [a,\eta) \bigr) \le \delta_\infty$ for some geodesic ray $[a,\eta)$, then $\bigabs{h(a)-h(b) - d(a,b)} \le K$.
A \emph{horoball} centered at $\eta\in \boundary X$ is a subset $B\subset X$ of the form $h^{-1}(-\infty,0]$ for some horofunction $h$ about $\eta$.
Each horoball has a unique center, which is the unique accumulation point of the horoball in $\boundary X$ (see \cite[\S 5]{BowditchRelHyp}).
\end{defn}

Gromov has considered a different, universal notion of horofunction defined on any proper metric space $X$ via a natural embedding of $X$ into the space of continuous real-valued functions on $X$ (see \cite{Gromov81}).  In the setting of $\delta$--hyperbolic spaces, the ``coarse'' definition above, due to Bowditch, is more general than Gromov's definition but is well-suited for coarse geometric arguments.
The following example illustrates how to use Gromov's construction to produce horoballs in the sense of Definition~\ref{def:Horoballs}.

\begin{exmp}
\label{exmp:LimitOfBalls}
A closed subset $A$ of a metric space $X$ is the \emph{Kuratowski--Painlev\'{e} limit} of a sequence of subsets $A_n$, denoted $A=\lim_{n\to\infty} A_n$, if
\[
   A = 
   \bigset{a\in X}{\limsup_{n\to\infty} d(a,A_n) = 0}
   =
   \bigset{a\in X}{\liminf_{n\to\infty} d(a,A_n) = 0}
   .
\]
Every sequence of subsets of a separable metric space has a subsequence that is convergent in the above sense by \cite[\S 29.VIII]{Kuratowski_VolI}.

Let $X$ be a proper $\delta$--hyperbolic space with basepoint $y$, and let $D\in \R$ be a constant.
By \cite[\S 2]{KarlssonMetzNoskov_06}, every sequence $(x_n)$ converging to a point $\eta\in \boundary X$ has a subsequence such that the balls $B(x_{n_i}, d(y,x_{n_i}) + D)$ converge to a horoball centered at $\eta$.
\end{exmp}

\begin{defn}
\label{def:AlmostExtendable}
Let $X$ be a geodesic space.
We say that \emph{geodesics are almost extendable} in $X$ if there is a constant~$C$ such that for all points $p$ and~$q$ in~$X$ there is a geodesic ray emanating from~$p$ that passes within a distance~$C$
of~$q$.
We typically say that $X$ is \emph{almost extendable} for short.
The constant $C$ is an \emph{almost extendability constant} of $X$.
In the literature, almost extendable spaces are sometimes called \emph{visual} spaces (see, for instance, \cite{BonkSchramm00}).
\end{defn}

Mihalik observed that infinite word hyperbolic groups are almost extendable (see \cite[\S 3]{BestvinaMess91}).
We first review the simple proof of this fact.

\begin{prop}
\label{prop:AlmostExtendable}
Let $X$ be any proper $\delta$--hyperbolic space with a cocompact group action by a group $G$.
If $\boundary X$ has at least two points, then $X$ is almost extendable.
\end{prop}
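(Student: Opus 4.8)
The plan is to combine cocompactness with the slim ideal triangle property of Remark~\ref{rem:SlimIdeal}. Since the action of $G$ on the proper space $X$ is cocompact, there is a compact set $K$ with $GK = X$; fixing a basepoint $x_0 \in K$ and setting $R = \diam(K)$, every point of $X$ lies within distance $R$ of the orbit $Gx_0$, in fact within $R$ of some single translate $gx_0$. Because $\boundary X$ contains at least two distinct points $\alpha_0$ and $\beta_0$, a standard Arzel\`a--Ascoli argument produces a bi-infinite geodesic line $\ell$ with endpoints $\ell(-\infty)=\alpha_0$ and $\ell(\infty)=\beta_0$. I would arrange for $x_0$ to lie on $\ell$.

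First I would show that every point of $X$ is uniformly close to a bi-infinite geodesic line with distinct ideal endpoints. Given any $q \in X$, choose $g \in G$ with $d(q, g x_0) \le R$. Then $m = g\ell$ is a bi-infinite geodesic line passing within $R$ of $q$, and its endpoints $\alpha = m(-\infty)$ and $\beta = m(\infty)$ are distinct points of $\boundary X$. Next, given the prescribed starting point $p$, I would invoke properness to produce geodesic rays from $p$ to each of $\alpha$ and $\beta$. These two rays together with $m$ form an ideal triangle with one ordinary vertex $p$ and two ideal vertices $\alpha,\beta$, whose third side is the line $m$ itself.

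The heart of the argument is then the slim ideal triangle property. Let $q'$ be a point of $m$ with $d(q,q') \le R$. Since $q'$ lies on the side $m$ of the ideal triangle, Remark~\ref{rem:SlimIdeal} places $q'$ within $\delta_\infty$ of the union of the two rays emanating from $p$. Hence $q'$, and therefore $q$, lies within $R + \delta_\infty$ of one of those two rays. That ray is a geodesic ray from $p$ passing within $C = R + \delta_\infty$ of $q$, giving an almost extendability constant that depends only on $\delta$ and the cocompactness constant $R$.

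The main obstacle is not the triangle estimate, which is immediate, but the two existence statements supporting it: the existence of a bi-infinite line joining two distinct boundary points, and the existence of geodesic rays from the ordinary vertex $p$ to each ideal vertex. Both rest on properness of $X$ via Arzel\`a--Ascoli, taking subsequential limits of geodesic segments whose endpoints run out to the relevant ideal points. One must also verify that the limiting line genuinely has the two prescribed \emph{distinct} endpoints, so that the ideal triangle $p\alpha\beta$ is nondegenerate; this is precisely where the hypothesis that $\boundary X$ has at least two points is used.
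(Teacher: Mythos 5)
Your proposal is correct and follows essentially the same argument as the paper: translate a bi-infinite geodesic line to within the cocompactness constant of $q$, form the ideal triangle with ordinary vertex $p$ and that line as a side, and apply the slim ideal triangle property to conclude that one of the two rays from $p$ passes within $D+\delta_\infty$ of $q$. The only difference is that you spell out the Arzel\`a--Ascoli existence arguments that the paper leaves implicit.
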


\begin{proof}
The space $X$ contains a bi-infinite geodesic line. Choose a compact set $K$ whose $G$--translates cover $X$, and let $D = \diam(K)$.
Choose points $p,q \in X$.
By cocompactness, there exists a geodesic line $c$ intersecting the closed ball of radius $D$ centered at $q$.
Consider the ideal geodesic triangle $\Delta$ having $p$ as a vertex and $c$ as a bi-infinite side.
By Remark~\ref{rem:SlimIdeal}, the triangle $\Delta$ is $\delta_\infty$--slim.
Note that $q$ is at most a distance $D$ from some point $c(t)$ on the line $c$, and $c(t)$ is within a distance $\delta_\infty$ of one of the rays $\bigl[ p, c(\infty)\bigr)$ or $\bigl[ p,c(-\infty) \bigr)$.
Therefore the ray in question comes within a distance $D+\delta_\infty$ of the point $q$.
\end{proof}

Throughout this section, $X$ is a one-ended $\delta$--hyperbolic space that admits a geometric group action and $C$ is its almost extendability constant.

In \cite{BestvinaMess91}, Bestvina--Mess introduce the following property ($\ddag_M$), which may or may not be satisfied by $X$ for each constant $M>0$.  The ($\ddag_M$) property is key for establishing the connection between local connectivity and separating horoballs.
\begin{description}
\item[($\ddag_M$)]
There is a number $L > 0$ such that for every $R>0$ and any three
points $x$,~$y$, $z \in X$ with $d(x,y) = d(x,z) = R$ and
$d(y,z) \le M$, there exists a path of length at most~$L$
connecting $y$ to~$z$ in the complement of the ball of radius
$R-C$ centered at~$x$.
\end{description}

The main goal of this section is to prove Corollary~\ref{cor:ddagHolds}, which states that $(\ddag_M)$ holds for all $M>0$.

A horoball $B$ in $X$ is \emph{essentially separating} if $B$ is closed in $X$ and the complement of $B$ has at least two components that each contain a geodesic ray not asymptotic to $B$.

\begin{prop}
\label{prop:SeparatingHoroball}
Let $X$ be a proper $\delta$--hyperbolic space with a cocompact group action by $G$.
Suppose there exists a constant $M>0$ such that $(\ddag_M)$ fails in $X$.  If $X$ is one-ended then it contains an essentially separating horoball.
\end{prop}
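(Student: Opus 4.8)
The plan is to argue by contradiction, extracting the separating horoball as a geometric limit of large metric balls. Negating $(\ddag_M)$ with $L = n$ produces, for each $n$, a radius $R_n > 0$ and points $x_n,y_n,z_n$ with $d(x_n,y_n) = d(x_n,z_n) = R_n$ and $d(y_n,z_n) \le M$, such that no path of length at most $n$ joins $y_n$ to $z_n$ in the complement of $\ball{x_n}{R_n-C}$. Using cocompactness I choose $g_n \in G$ carrying $y_n$ into a fixed compact set $K$ and replace each configuration by its $g_n$--translate, which changes no distances. Since $d(y_n,z_n)\le M$, properness lets me pass to a subsequence along which $y_n \to y$ and $z_n \to z$. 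The functions $f_n(p) = d(p,x_n) - (R_n - C)$ are $1$--Lipschitz with $f_n(y_n)=f_n(z_n)=C$; passing to a further subsequence they converge uniformly on bounded sets to a $1$--Lipschitz function $f$, and I set $B = \set{p}{f(p)\le 0}$, centered at $\xi = \lim x_n$. Since $f(y)=f(z)=C>0$, both $y$ and $z$ lie outside $B$.

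The first step is to show $R_n \to \infty$, which is where one-endedness enters. Suppose instead that $R_n$ stays bounded along a subsequence; then $x_n \to x_\infty \in X$ and the balls $\ball{x_n}{R_n-C}$ converge to a single \emph{compact} ball $B_0 = \ball{x_\infty}{R'-C}$. Extending $[x_\infty,y]$ and $[x_\infty,z]$ outward produces rays from $y$ and from $z$ along which the distance to $x_\infty$ increases, so both rays avoid $B_0$ and escape to infinity. Because $X$ is one-ended, the complement of the compact set $B_0$ has a single unbounded component, which therefore contains both $y$ and $z$; hence they are joined by a finite path $\alpha$ in $X\setminus B_0$. As in the next step, such a path survives the limit to yield, for large $n$, a connecting path of bounded length in the complement of $\ball{x_n}{R_n-C}$, contradicting the choice of configuration. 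Therefore $R_n\to\infty$ and $B$ is a genuine horoball.

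The heart of the argument is that $B$ separates $y$ from $z$. Suppose a path $\alpha$ of finite length joined $y$ to $z$ inside $X\setminus B$. As $\alpha$ is compact and $f>0$ on $\alpha$, in fact $f \ge \epsilon$ on $\alpha$ for some $\epsilon>0$, so for large $n$ one has $f_n \ge \epsilon/2$ on $\alpha$; that is, $\alpha$ avoids $\ball{x_n}{R_n-C}$. Prepending $[y_n,y]$ and appending $[z,z_n]$ (both short, and outside the ball for large $n$ since $f$ is continuous with $f(y)=f(z)=C>0$) yields a path from $y_n$ to $z_n$ of length at most $\mathrm{length}(\alpha)+o(1)$ in the complement of $\ball{x_n}{R_n-C}$. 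For $n > \mathrm{length}(\alpha)+1$ this contradicts the choice of the configuration, so $y$ and $z$ lie in distinct components $U_y \ni y$ and $U_z \ni z$ of $X\setminus B$. This same survival mechanism is the one invoked in the bounded case above.

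It remains to produce the non-asymptotic rays. By almost extendability (Proposition~\ref{prop:AlmostExtendable}) I extend $[x_n,y_n]$ beyond $y_n$ to a geodesic ray; letting $n\to\infty$ and applying Arzel\`a--Ascoli, the bi-infinite geodesics through $x_n$ and $y_n$ converge to a line from $\xi$ through $y$ to a second endpoint $\eta \ne \xi$, whose forward ray $r_y$ emanates from $y$ moving away from $\xi$. Along $r_y$ the function $f$ is coarsely increasing, so after discarding a bounded initial segment $r_y$ lies in $X\setminus B$, hence in $U_y$; since $r_y(\infty)=\eta\ne\xi$, it is not asymptotic to $B$. The same construction at $z$ gives a ray $r_z \subset U_z$ converging to some $\eta'\ne\xi$. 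Thus $X\setminus B$ has two components each containing a geodesic ray not asymptotic to $B$, so $B$ is essentially separating. I expect the main obstacle to lie in the two limiting passages — verifying that the balls $\ball{x_n}{R_n-C}$ converge to an honest horoball (which requires $R_n\to\infty$) and that the obstruction to short connecting paths persists in the limit — together with the coarse monotonicity of $f$ along $r_y$ and $r_z$ needed to confine those rays to their components.
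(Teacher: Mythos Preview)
Your proof is correct and follows essentially the same route as the paper's: extract a sequence of configurations witnessing the failure of $(\ddag_M)$, normalize by cocompactness, rule out bounded $R_n$ using one-endedness, and in the unbounded case obtain a horoball as the limit of the balls $\bar{B}(x_n,R_n-C)$, with the rays coming from almost extendability via Arzel\`a--Ascoli. Your treatment is somewhat more explicit than the paper's in two places---the $1$--Lipschitz limit $f_n \to f$ to describe the limiting horoball, and the ``survival'' argument showing that a hypothetical finite connecting path in $X\setminus B$ would persist at stage $n$---but these elaborate the same underlying ideas rather than replace them. One small inaccuracy: the limiting line need not pass \emph{through} $y$, only within $C$ of it, so $r_y$ should be taken to emanate from a point near $y$ rather than from $y$ itself; this does not affect the argument.
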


The proof of this result depends on the almost extendable property of $X$, but hyperbolicity plays no essential role in the proof. We note that any proper $\CAT(0)$ space with a cocompact group of isometries is almost extendable by \cite{GeogheganOntaneda07}.

\begin{proof}
If $(\ddag_M)$ fails, then for each $n$ there is a positive number $R_n$ and a triple of points
$x_n$, $y_n$, and $z_n$ with $d(x_n,y_n) = d(x_n,z_n) = R_n$ and $d(y_n,z_n) \le M$ such that any path from $y_n$ to $z_n$ in the complement of the closed ball $B_n = \bar{B}(x_n,R_n-C)$ has length at least~$n$.  Since $X$ is almost extendable, we can choose rays $c_n$ and $c'_n$ based at $x_n$ passing within a distance $C$ of $y_n$ and $z_n$ respectively.

After passing to a subsequence, we can assume that either $R_n \to \infty$ or $R_n$ remains bounded as $n \to \infty$.  Suppose first that $R_n$ remains bounded.  After passing to a further subsequence and applying the cocompact isometry group, we can assume that $x_n, y_n$, and $z_n$ converge to points $x,y,z \in X$ respectively.  We can also assume that $c_n$ converges to a ray from $x$ to some point $\xi \in \boundary X$ that passes within $C$ of $y$.  Similarly, $c'_n$ converges to a ray from $x$ to some $\xi'\in \boundary X$ that passes within $C$ of $z$.  In this case, the closed balls $B_n$ converge, in the sense of Kuratowski--Painlev\'{e}, to a compact set $B$.  
The existence of a path $\alpha$ of finite length $< L$ from $y$ to $z$ in the complement of $B$ would imply the existence of a path $\alpha'$ from $y_n$ to $z_n$ in the complement of $B_n$ of length $<L$ for sufficiently large $n$.  However, there is no such path when $n>L$, so $B$ must separate the ends of $X$ containing $\xi$ and $\xi'$.  In particular these ends are distinct, which is impossible since $X$ is one-ended.

Therefore the sequence $R_n$ must limit to infinity.  As before we may pass to a subsequence so that $y_n,z_n$ converge to $y,z \in X$ and $x_n \to \eta \in \boundary X$ and the balls $B_n$ converge to a closed horoball $B$ centered at $\eta$ (see Example~\ref{exmp:LimitOfBalls}). In this case $y$ and $z$ must lie in the complement of $B$.  We may also assume that the rays $c_n$ converge to a geodesic line from $\eta$ to $\xi$ that passes within a distance $C$ of $y$. Similarly the rays $c'_n$ converge to a line from $\eta$ to some $\xi'$ that passes within $C$ of $z$.  By the same reasoning as above, there is no path of finite length from $y$ to $z$ in the complement of $B$. As in the previous case, $B$ is an essentially separating horoball.
\end{proof}

The hyperbolicity of the space $X$ plays a key role in the next result, which does not require a cocompact group action.

\begin{prop}\label{prop:CutSet}
Suppose $X$ is a one-ended proper $\delta$--hyperbolic space containing an essentially separating horoball $B$ centered at $\eta \in \boundary X$.
Then $\eta$ is a cut point of $\boundary X$.
\end{prop}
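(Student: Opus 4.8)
The plan is to use the horoball $B$ to partition $\boundary X \setminus \{\eta\}$ into two nonempty relatively clopen pieces; since $X$ is one-ended its boundary is connected, so such a partition is exactly the statement that $\eta$ is a cut point. I first record the two features of the Busemann function $b_\eta$ centered at $\eta$ that will carry the argument. Normalize $b_\eta$ so that $B = \set{x \in X}{b_\eta(x) \le 0}$. \emph{(i)} Along any geodesic line $\ell$ with $\ell(-\infty) = \eta$ the value $b_\eta\bigl(\ell(t)\bigr)$ agrees with $t$ up to an additive error bounded in terms of $\delta$, so it is \emph{coarsely increasing}: beyond the first parameter at which it exceeds a suitable height it remains positive, and $d\bigl(\ell(t), B\bigr) \to \infty$ without coarsely returning to $B$. \emph{(ii)} Any two geodesics with a common ideal endpoint fellow travel within a distance $H = H(\delta)$ as they approach it; in particular, if $\xi' \to \xi$ in $\boundary X$ then the lines $\ell_{\xi'}$ from $\eta$ to $\xi'$ fellow travel $\ell_\xi$ within $H$ along initial segments issuing from $\eta$ of lengths tending to infinity. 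Both follow from slimness of ideal triangles (Remark~\ref{rem:SlimIdeal}) and the comparison estimates behind Proposition~\ref{prop:Uniformity}.

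Let $\Pi$ be the set of connected components of $X \setminus B$. For $\xi \in \boundary X \setminus \{\eta\}$ take the line $\ell_\xi$ from $\eta$ to $\xi$; by \emph{(i)} there is a parameter $T_\xi$ beyond which $\ell_\xi$ lies in $X \setminus \nbd{B}{H}$, so its $\xi$--end tail $\ell_\xi\bigl([T_\xi,\infty)\bigr)$ is connected and disjoint from $B$ and hence lies in a single component $\kappa(\xi) \in \Pi$. This is independent of choices: two geodesics running to $\xi$ fellow travel within $H$ at the $\xi$--end by \emph{(ii)}, and since both tails are eventually farther than $H$ from $B$, the short segments joining matched points avoid $B$ and place the tails in the same component. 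The essentially separating hypothesis supplies components $U_1 \ne U_2$ of $X \setminus B$ containing geodesic rays $r_1, r_2$ that are not asymptotic to $B$, \emph{i.e.}, with endpoints $\xi_1, \xi_2 \ne \eta$; comparing $r_i$ with $\ell_{\xi_i}$ as above gives $\kappa(\xi_1) = U_1$ and $\kappa(\xi_2) = U_2$, so $\kappa$ takes at least two values.

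It remains to show that $\kappa \colon \boundary X \setminus \{\eta\} \to \Pi$ is locally constant; granting this, every fibre is open, so $\kappa^{-1}(U_1)$ and its complement form a separation of $\boundary X \setminus \{\eta\}$ with $\xi_1$ and $\xi_2$ on opposite sides, which is the desired conclusion. Fix $\xi$ with $\kappa(\xi) = U$ and choose $s$ with $b_\eta\bigl(\ell_\xi(s)\bigr) > 2H$, so $\ell_\xi(s) \in U$. Using the neighborhood basis of Definition~\ref{defn:BoundaryTopology} together with \emph{(ii)}, any $\xi'$ sufficiently close to $\xi$ has $\ell_{\xi'}$ fellow traveling $\ell_\xi$ out past the parameter $s$, hence passing within $H$ of $\ell_\xi(s)$ at a point of Busemann height exceeding $H$; the connecting segment avoids $B$, so $\ell_{\xi'}$ meets $U$ there. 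By the coarse monotonicity \emph{(i)} of $b_\eta$ along the line $\ell_{\xi'}$ (which also issues from $\eta$), the remainder of its $\xi'$--tail stays outside $B$, hence inside $U$, giving $\kappa(\xi') = U$.

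The heart of the argument, and the step I expect to demand the most care, is this last local constancy claim. The real work is converting ``$\xi'$ is close to $\xi$ in the visual metric'' into ``$\ell_{\xi'}$ fellow travels $\ell_\xi$ out to a \emph{prescribed} Busemann height,'' and then invoking the \emph{monotonicity} of $b_\eta$ along a line emanating from $\eta$—not merely its eventual divergence—to guarantee that once $\ell_{\xi'}$ has climbed above $B$ it never coarsely returns. Organizing $\kappa$ around geodesic lines issuing from $\eta$, rather than rays from an arbitrary basepoint, is precisely what makes this monotonicity available and rules out a geodesic dipping back into $B$ after leaving it, which is the scenario that would otherwise break the separation.
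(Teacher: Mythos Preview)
Your proof is correct, and it takes a genuinely different route from the paper's. The paper argues by contradiction: assuming $\xi$ and $\xi'$ lie in the same component of $\boundary X \setminus \{\eta\}$, it covers $\boundary X \setminus \{\eta\}$ by (interiors of) basic neighborhoods $W'$ contained in $\bar{X}\setminus\bar{B}$, takes a finite chain of such neighborhoods from $\xi$ to $\xi'$, and then strings together the \emph{radial paths} of Definition~\ref{defn:RadialPaths} inside those neighborhoods to produce an actual path in $X\setminus B$ joining points on $c$ and $c'$, contradicting that they lie in different components. No Busemann functions appear; the geometry is entirely packaged in the radial-path connectivity of the basic neighborhoods.

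Your argument instead builds the separation directly, via the locally constant map $\kappa\colon \boundary X\setminus\{\eta\}\to\Pi$ defined using geodesic \emph{lines} issuing from $\eta$. The decisive idea you exploit---and which the paper does not use---is the coarse monotonicity of $b_\eta$ along any line with one end at $\eta$; this is exactly what guarantees that once $\ell_{\xi'}$ has climbed above the horosphere it never dips back into $B$, so the component is determined. The paper's approach is lighter on analytic machinery (no horofunctions, just the neighborhood basis already set up in Section~\ref{sec:Hyperbolic}), and its chain argument is entirely self-contained. Your approach is more constructive and arguably more geometric: it identifies the separation of $\boundary X\setminus\{\eta\}$ with the decomposition of $X\setminus B$ into components, which is conceptually satisfying. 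One small point worth tightening in your write-up: in claim~\emph{(ii)} the phrase ``initial segments issuing from $\eta$'' needs care since $\eta$ is ideal---what you want is that, parametrized by Busemann height, $\ell_{\xi'}$ and $\ell_\xi$ are $H$-close on $(-\infty,T]$ with $T\to\infty$ as $\xi'\to\xi$; this follows from slimness of the ideal triangle $\eta,\xi,\xi'$ together with the fact that the side $[\xi,\xi']$ lies at Busemann height tending to $+\infty$.
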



\begin{proof}
Fix a basepoint $p \in X$.
Let $\eta\in \boundary X$ be the center of the horoball $B$. Choose rays $c$ and $c'$ based at $p$, such that they have tails in distinct components of $X \setminus B$.  Let $\xi = c(\infty)$ and $\xi'=c'(\infty)$.
We will show that $\xi$ and $\xi'$ are in distinct components of $\boundary X \setminus \{\eta\}$.

Fix a constant $D >2\delta$, and consider the corresponding neighborhood base for points of $\boundary X$ described in Definition~\ref{defn:BoundaryTopology}.
Since $\bar{B} = B \cup \{\eta\}$ is closed in $\bar{X}$, each point $\zeta \in \boundary X \setminus \{\eta\}$ has a basic neighborhood $W$ in $\bar{X}\setminus \bar{B}$.
Recall that basic neighborhoods are not necessarily open, but their points may be connected by radial paths, as described in Definition~\ref{defn:RadialPaths}.
Let $\Set{U}$ be the set of all interiors of such neighborhoods $W$.  Then $\boundary{X} \setminus \{\eta\}$ is contained in the union of all members of $\Set{U}$.

Suppose by way of contradiction that $\xi$ and $\xi'$ are not separated by $\eta$ in $\boundary X$.
Then we may choose a chain $U_1, \dots, U_k$ in~$\Set{U}$ such that
$\xi \in U_1$, $\xi' \in U_k$, and $U_i \cap U_{i+1} \cap \bigl(\boundary X \setminus \{\eta\}\bigr)$ is nonempty.
Choose $a_0$ to be a point on the ray $c$ that lies inside $U_1$. Similarly choose $a_k$ on $c'$ lying inside $U_k$.
For $0<i<k$, choose $a_i \in U_i \cap U_{i+1}$.
For each $i$, choose a basic neighborhood $W_i \subseteq \bar{X} \setminus \bar{B}$ such that $U_i \subseteq W_i$.
Choose a radial path $\gamma_i$ from $a_{i-1}$ to $a_{i}$ lying inside $W_i$.
Since each $a_i$ lies in the interior of a basic neighborhood, we may assume that the paths $\gamma_i$ and $\gamma_{i+1}$ use the same generalized ray from $p$ to $a_i$.
The union $\gamma_1 \cup \cdots \cup \gamma_k$ is a connected set $K$ in $\bar{X}\setminus \bar{B}$ containing $a_0$ and $a_k$.
Inside $K$ is a path of $X \setminus B$ joining $a_0$ to $a_k$, contradicting the choice of $c$ and $c'$.
\end{proof}

We now use the Cut Point Theorem, which asserts the boundary of a one-ended hyperbolic group does not contain a cut point.  The Cut Point Theorem combines major contributions of Bowditch, Levitt, and Swarup \cite{Bowditch99Treelike,Levitt98,Swarup96}.

\begin{cor}
\label{cor:ddagHolds}
Suppose $X$ is a one-ended $\delta$--hyperbolic space and $G$ acts geometrically on $X$. Then $(\ddag_M)$ holds for all $M>0$.
\end{cor}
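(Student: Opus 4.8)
The plan is to assemble the corollary as a direct contrapositive chaining of the two preceding propositions together with the Cut Point Theorem. Suppose, for contradiction, that $(\ddag_M)$ fails for some $M > 0$. Since $X$ is a one-ended $\delta$--hyperbolic space admitting a geometric action by $G$, Proposition~\ref{prop:AlmostExtendable} applies (the boundary has at least two points because $X$ is one-ended, hence unbounded with infinitely many boundary points), so $X$ is almost extendable. This legitimizes the standing hypotheses of the section and lets me invoke the machinery built on top of almost extendability.

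With that in hand, the next step is to feed the assumed failure of $(\ddag_M)$ into Proposition~\ref{prop:SeparatingHoroball}. That proposition takes exactly a $\delta$--hyperbolic space with a geometric action, a constant $M$ for which $(\ddag_M)$ fails, and one-endedness, and produces an essentially separating horoball $B$ in $X$ centered at some $\eta \in \boundary X$. I would simply observe that all of these hypotheses are met and conclude the existence of $B$.

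The third step applies Proposition~\ref{prop:CutSet}, whose hypotheses are that $X$ is a one-ended proper $\delta$--hyperbolic space containing an essentially separating horoball $B$ centered at $\eta$. Since $X$ admits a geometric action, it is automatically proper, so the hypotheses are satisfied and I conclude that $\eta$ is a cut point of $\boundary X$. Finally, because $G$ acts geometrically on the one-ended $\delta$--hyperbolic space $X$, the group $G$ is a one-ended word hyperbolic group and $\boundary X$ is equivariantly its Gromov boundary; the Cut Point Theorem of \cite{Bowditch99Treelike,Levitt98,Swarup96} asserts that this boundary contains no cut point. This contradicts the existence of $\eta$, and therefore $(\ddag_M)$ must hold for every $M > 0$.

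Since each of the three implications is already established as a standalone proposition, there is no genuine calculation left to perform here; the only real content is checking that the hypotheses match at each hand-off. The main point requiring care is the invocation of the Cut Point Theorem: one must verify that the abstract space-level statement (an essentially separating horoball yields a cut point of the boundary) produces precisely the object that the Cut Point Theorem forbids, namely a cut point in the Gromov boundary of a one-ended hyperbolic group, which requires noting that the geometric action forces $G$ to be hyperbolic and one-ended with $\boundary X \cong \boundary G$. That identification is the subtle step, as it is what connects the purely metric-geometric conclusion to the deep group-theoretic input.
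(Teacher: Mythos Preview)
Your proposal is correct and follows essentially the same approach as the paper: a contrapositive argument chaining Proposition~\ref{prop:SeparatingHoroball} and Proposition~\ref{prop:CutSet} together with the Cut Point Theorem. You add a bit of useful extra care (explicitly noting almost extendability and the identification $\boundary X \cong \boundary G$), but the logical skeleton is identical to the paper's proof.
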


\begin{proof}
By the Cut Point Theorem, the boundary of $X$ does not contain a cut point.  Suppose by way of contradiction that $(\ddag_M)$ fails for some $M>0$.  
Then by Proposition~\ref{prop:SeparatingHoroball}, the space $X$ contains an essentially separating horoball $B$.
Finally, by Proposition~\ref{prop:CutSet} shows that any such horoball gives rise to a cut point in $\boundary X$, which contradicts the cut point theorem.
\end{proof}

\section{Semistability at infinity}
\label{sec:Semistable}

The goal of this section is to give a simple, direct proof of semistability for one-ended hyperbolic groups using the condition $(\ddag_M)$. Indeed, $(\ddag_M)$ is very close in spirit to a characterization of semistability studied by Mihalik.
We also explain how to derive semistability for arbitrary hyperbolic groups from the one-ended case.

Let $X$ be a simply connected space.
A \emph{ray} in $X$ is a proper map $c\colon [0,\infty) \to X$.
Two rays $c$ and $c'$ in $X$ \emph{converge to the same end} of $X$ if for any compact set $D \subset X$ there exists $N\ge 0$ such that $c\bigl( [N,\infty) ] \bigr)$ and $c\bigl( [N,\infty) \bigr)$ lie in the same component of $X\setminus D$.  Convergence to the same end is an equivalence relation whose equivalence classes are called the \emph{ends} of $X$.

A simply connected space $X$ is \emph{semistable at infinity} if any two rays in $X$ converging to the same end are joined by a proper homotopy.

A theorem of Mihalik (\cite[Thm.~2.1]{Mihalik83}) states that semistability is equivalent to the following condition:
for some (equivalently any) ray $r$ and each compact set $D \subseteq X$, there is a larger compact set $E$ so that, for every still larger compact set $F$, each pointed loop $\alpha$ in  $X \setminus E$ based on the ray $r$ can be homotoped into $X\setminus F$ via a homotopy in $X-D$ that slides the base point along $r$.

The following lemma makes precise the idea that $(\ddag_M)$ allows one to push paths outward in the Cayley graph in a controlled manner.

\begin{lem}
\label{lem:PushPaths}
Let $G$ be a one-ended word hyperbolic group acting geometrically on a $\delta$--hyperbolic space $X$.
There is a constant $\lambda$ such that the following holds.
Choose any basepoint $p\in X$.
Let $c$ and $c'$ be geodesic rays based at $p$.

If there exists a path $\alpha$ of length at most $\ell$ from $c$ to $c'$ in the complement of the ball $B(p,r)$ then there exists a path $\beta$ of length at most $\ell\lambda$ from $c$ to $c'$ in the complement of $B(p,r+1)$.

If $X$ is simply connected, there is a further constant $\kappa=\kappa(X)$ such that the following holds.
If $r>r_0+\kappa$ for some positive constant $r_0$, then for any path $\alpha$ as above, the path $\beta$ may be chosen homotopic to $\alpha$ by a homotopy consisting of paths from $c$ to $c'$ that lie in the complement of $B(p,r_0)$.
\end{lem}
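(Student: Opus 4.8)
The plan is to build $\beta$ by subdividing $\alpha$, projecting the subdivision points radially onto the sphere of radius $R^* = r+1+C$, and then reconnecting consecutive projected points using the short paths supplied by $(\ddag_{M})$. First I would subdivide $\alpha$ into points $a_0,\dots,a_N$ with $d(a_{i-1},a_i)\le 1$, so that $N\le \ell+1$; since $\alpha$ avoids $B(p,r)$, each $a_i$ satisfies $d(p,a_i)\ge r$. Using almost extendability I would choose for each $i$ a geodesic ray $\rho_i$ from $p$ passing within $C$ of $a_i$, taking $\rho_0=c$ and $\rho_N=c'$ so that the eventual endpoints of $\beta$ lie on $c$ and $c'$. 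Set $\hat a_i=\rho_i(R^*)$. The choice $R^*=r+1+C$ is made precisely so that any path joining two points of the sphere $S(p,R^*)$ while avoiding $B(p,R^*-C)$ automatically avoids $B(p,r+1)$.

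The crux of the length statement is the estimate that the projected points stay uniformly close: there is a constant $M'$, depending only on $C$ and $\delta$, with $d(\hat a_{i-1},\hat a_i)\le M'$ for every $i$. This is where hyperbolicity enters. Because $a_{i-1}$ and $a_i$ are within distance $1$ and both lie at radius $\ge r$, the rays $\rho_{i-1}$ and $\rho_i$ pass within a bounded distance of each other at parameters $\ge r-C$; applying $\delta$-thinness to the triangle on $p$ and those two points (equivalently, Lemma~\ref{lem:TripodIdeal}) shows the rays remain $\delta$-close out to a radius $g\approx r$. Since $R^*$ exceeds $g$ by only the bounded amount $O(C+\delta)$, a one-Lipschitz estimate $d(\hat a_{i-1},\hat a_i)\le 2(R^*-g)+\delta$ yields the constant $M'$. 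Now I apply $(\ddag_{M'})$, valid by Corollary~\ref{cor:ddagHolds}, to each pair $\hat a_{i-1},\hat a_i$ on $S(p,R^*)$, obtaining a path $\sigma_i$ of length at most $L'$ joining them in the complement of $B(p,R^*-C)=B(p,r+1)$. The concatenation $\beta=\sigma_1\cdots\sigma_N$ then runs from $c(R^*)$ to $c'(R^*)$, avoids $B(p,r+1)$, and has length at most $NL'\le(\ell+1)L'$, so that $\lambda$ may be taken to be a constant multiple of $L'$ (disposing separately of the trivial case of very short $\alpha$).

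For the final statement I assume $X$ is simply connected and exhibit the homotopy from $\alpha$ to $\beta$ as a finite composition of elementary homotopies indexed by the subdivision. For each $i$ the closed loop $L_i$ formed by the arc $\alpha|_{[a_{i-1},a_i]}$, the radial segment from $a_i$ to $\hat a_i$ along $\rho_i$, the reverse of $\sigma_i$, and the radial segment from $\hat a_{i-1}$ to $a_{i-1}$ along $\rho_{i-1}$ is null-homotopic, since $X$ is simply connected; filling $L_i$ homotopes $\alpha|_{[a_{i-1},a_i]}$, rel its endpoints, to the spiked path ``radial out'', then $\sigma_i$, then ``radial in''. Reusing the same ray $\rho_i$ at the shared endpoint $a_i$ of consecutive pieces makes the interior radial spikes mutually inverse, so they cancel by a homotopy supported on those radial segments, which lie at radius $\ge\min(r,R^*)=r>r_0$. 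Throughout, the endpoints of the intermediate paths slide along $\rho_0=c$ and $\rho_N=c'$, exactly as the statement requires.

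The hard part is controlling how far the filling disks dip toward $p$, so as to keep the whole homotopy outside $B(p,r_0)$. Each $L_i$ lies at radius $\ge r$ but may be long when $a_i$ is far from $p$; the saving feature is that the two radial sides run along $\rho_{i-1}$ and $\rho_i$, whose endpoints are pairwise within $O(1)+L'$, so by thinness of geodesic quadrilaterals in a $\delta$-hyperbolic space the loop $L_i$ is \emph{uniformly thin}. I would therefore cut $L_i$ in the radial direction into sub-loops of uniformly bounded diameter and fill each one, invoking that a simply connected geodesic space carrying a geometric group action is uniformly simply connected, i.e.\ admits a bound, depending only on $X$, on the size of a filling of a loop in terms of its diameter. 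This produces a constant $\kappa_0=\kappa_0(X)$ such that the filling of every $L_i$, and hence the entire homotopy, stays within the $\kappa_0$-neighborhood of $\alpha\cup\beta$, thus at radius $\ge r-\kappa_0$. Taking $\kappa=\kappa_0$, the hypothesis $r>r_0+\kappa$ guarantees the homotopy avoids $B(p,r_0)$; establishing this uniform-filling bound and verifying the thinness of $L_i$ is where I expect the genuine work to lie.
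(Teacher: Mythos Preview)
Your proposal is correct and follows the same overall strategy as the paper---subdivide $\alpha$, use almost extendability to attach rays, reconnect via $(\ddag_M)$, and for the homotopy fill the resulting small loops using uniform simple connectivity from the cocompact action.  The difference is in one choice that makes your execution harder than necessary.  You project each subdivision point all the way to the fixed sphere $S(p,R^*)$, which forces you to invoke hyperbolicity (thin triangles) to bound $d(\hat a_{i-1},\hat a_i)$, and then leaves you with homotopy loops $L_i$ that can be long when $a_i$ is far from $p$, requiring a further thin-quadrilateral subdivision.  The paper instead moves each nearby ray-point $y_j$ outward along its ray by at most $2C+1$ to a point $z_j$ with $d(p,z_j)\ge r+C+1$; then $d(z_{j-1},z_j)\le 6C+3$ by the triangle inequality alone, with no appeal to $\delta$-thinness, and $(\ddag_M)$ with $M=6C+3$ gives $\beta$.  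Because the radial segments now have length at most $2C+1$, each loop $\gamma_j$ has total length at most $M+L$, so it bounds a disc in its $H$-neighborhood directly, and one may take $\kappa=C+H$---no subdivision of long loops is needed.  In short, by not insisting on a fixed sphere the paper keeps the first part hyperbolicity-free and makes the homotopy step immediate, eliminating precisely the part you flagged as ``where the genuine work lies.''
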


\begin{proof}
Let $C$ be the almost extendability constant of $X$, and let $M=6C+3$.  Consider the constant $L=L(M)$ given by $(\ddag_M)$.
Choose any radius $r\ge 0$.
Consider a path $\alpha\colon [0,\ell] \to X$ in the complement of the ball $B(p,r)$ parameterized so that $d \bigl( \alpha(j-1),\alpha(j) \bigr) \le 1$ for each $j = 1,\dots,\ell$.
Since $X$ is almost extendable, for each $j=0,\dots,\ell$ we may choose 
a geodesic ray $c_j$ based at $p$ containing a point $y_j$ such that $d\bigl(y_j,\alpha(j)\bigr) < C$.
We may assume that $c_0=c$ and $c_\ell=c'$.
Then for each $j = 0,\dots,\ell$ the point $y_j$ lies outside the ball $B(y,r-C)$.
Choose points $z_0, \dots, z_\ell$ on these rays so that
$d(y_j,z_j)$ is at most $2C+1$ and $d(p,z_j)$ is at least $r+C+1$.
Then $d(z_{j-1},z_{j})$ is at most $6C + 3=M$.  By $(\ddag_M)$, there exists a path $\beta_j$ of length at most $L$ joining $z_{j-1}$ to $z_{j}$ in the complement of the ball $B(p,r+1)$.
Since the path $\beta$ formed by concatenating the paths $\beta_j$ has length at most $\ell L$, the first conclusion of the lemma follows if we set $\lambda=L$.

Now suppose $X$ is simply connected.  By cocompactness, there is a constant $H$ so that any loop $\gamma$ in $X$ of length at most $M+L$ bounds a disc in the $H$--neighborhood of $\gamma$.
Let $\kappa = C+H$.
For any fixed radius $r_0$, suppose now that $r>r_0+\kappa$.
Let $\eta_j$ be a geodesic from $\alpha(j)$ to $y_j$. Consider a loop $\gamma_j$ formed by following $\alpha$ from $\alpha(j-1)$ to $\alpha(j)$, then following $\eta_j$ to $y_j$, following $c_j$ from $y_j$ to $z_j$, following $\beta_j$ to $z_{j-1}$, following $c_{j-1}$ to $y_{j-1}$, and then following $\eta_{j-1}$ back to $\alpha(j-1)$.
Since $\gamma_j$ is a loop of length at most $M+L$ in the complement of $B(p,r-C)$, it bounds a disc in the complement of the ball $B(p,r-\kappa)$.  In particular $\gamma_j$ bounds a disc disjoint from $B(p,r_0)$.
\end{proof}

\begin{prop}
\label{prop:OneEndedSemistable}
Every one-ended word hyperbolic group is semistable at infinity.
\end{prop}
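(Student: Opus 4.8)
The plan is to verify Mihalik's characterization of semistability, using Lemma~\ref{lem:PushPaths} as the engine that pushes loops radially outward. First I realize $G$ via a geometric action on a simply connected $\delta$--hyperbolic space $X$ (for instance a Rips complex), so that Corollary~\ref{cor:ddagHolds} guarantees $(\ddag_M)$ for every $M>0$ and hence both conclusions of Lemma~\ref{lem:PushPaths} are available; let $\kappa=\kappa(X)$ and $\lambda$ be the constants furnished there. Because $X$ is one-ended, it is enough to verify Mihalik's condition for a single convenient base ray, which I take to be a geodesic ray $c_0$ emanating from a basepoint $p$.

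Now fix a compact set $D$. Using properness of $X$, choose $R_D$ with $D\subseteq B(p,R_D)$, set $r_0=R_D$, and declare $E=\bar{B}(p,R_D+\kappa+1)$. Given any larger compact $F$, choose $R_F$ with $F\subseteq B(p,R_F)$. Let $\alpha$ be a pointed loop in $X\setminus E$ based at a point $c_0(t_0)$ of the ray. Since $\alpha$ avoids $E$ it lies outside $B(p,r)$ for $r=R_D+\kappa+1$, and this $r$ exceeds $r_0+\kappa$. I then apply the second conclusion of Lemma~\ref{lem:PushPaths} with $c=c'=c_0$: it produces a loop $\beta$ outside $B(p,r+1)$ together with a homotopy from $\alpha$ to $\beta$ through paths from $c_0$ to $c_0$ that avoid $B(p,r_0)\supseteq D$. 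The crucial bookkeeping is to run the construction with the two endpoints treated identically, taking the terminal data $z_0=z_\ell$ on $c_0$, so that every intermediate stage is a genuine loop whose basepoint travels outward along $c_0$; this realizes the ``slides the base point along $r$'' requirement.

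Iterating, I push $\alpha$ outward one unit of radius at a time. At each stage the current loop lies outside a ball of radius at least $R_D+\kappa+1>r_0+\kappa$, so the hypothesis of Lemma~\ref{lem:PushPaths} persists and another push is possible; the loop's length is multiplied by $\lambda$ at each step, but this is harmless, since only finitely many pushes are needed to move the loop entirely outside $B(p,R_F)$ and hence outside $F$. Concatenating these finitely many homotopies yields a single homotopy carrying $\alpha$ into $X\setminus F$, lying in $X\setminus D$, and sliding the basepoint along $c_0$. This is exactly Mihalik's condition, so $X$ is semistable at infinity and therefore so is $G$.

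The step I expect to require the most care is the endpoint bookkeeping: I must check that the homotopy supplied by Lemma~\ref{lem:PushPaths}, when specialized to $c=c'=c_0$ with matched terminal points $z_0=z_\ell$, genuinely passes through loops based on $c_0$ rather than through arbitrary paths with two free endpoints, and that the basepoint's motion stays monotone outward along $c_0$. Everything else, namely the choice of radii, the persistence of the hypothesis $r>r_0+\kappa$ under iteration, and the finiteness of the number of pushes, is routine once this point is pinned down.
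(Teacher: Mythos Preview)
Your proposal is correct and follows essentially the same route as the paper: the paper's proof is a terser version of your argument, simply stating that Lemma~\ref{lem:PushPaths} lets one homotope loops outside $B(p,r)$ into the complement of $B(p,r+1)$ via a homotopy avoiding $B(p,r_0)$, and then iterating. Your additional care about the endpoint bookkeeping (choosing $z_0=z_\ell$ so the intermediate stages remain loops based on $c_0$) is a genuine detail the paper glosses over, but it is straightforward to arrange exactly as you describe.
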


\begin{proof}
Assume $G$ acts geometrically on a simply connected $\delta$--hyperbolic space $X$.
Choose a basepoint $p$ and a geodesic ray $c$ based at $p$.
We apply Lemma~\ref{lem:PushPaths} with $c=c'$.
Using the notation of Lemma~\ref{lem:PushPaths}, a loop $\alpha$ outside the ball $B(p,r)$ can be homotoped to a path in the complement of $B(p,r+1)$ by a homotopy in the complement of $B(p,r_0)$.
The homotopy involves paths with each of their endpoints on $c$.  At each stage of the homotopy, we insert a subpath of $c$ joining these endpoints so that we now have a homotopy of loops.
Iterating this conclusion, we may homotope loops into the complement of an arbitrarily large ball to see that $X$ is semistable at infinity.
\end{proof}

We now discuss how to pass from the one-ended case to the general case to conclude the following result, first observed by Geoghegan using a different argument (see \cite{BestvinaMess91}).

\begin{thm}[Geoghegan]
\label{thm:Semistable}
Every word hyperbolic group $G$ is semistable at infinity.
\end{thm}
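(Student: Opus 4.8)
The plan is to reduce the general case to the one-ended case established in Proposition~\ref{prop:OneEndedSemistable}, exploiting the fact that semistability at infinity behaves well with respect to splittings over finite subgroups. A word hyperbolic group $G$ has zero, one, two, or infinitely many ends. If $G$ is finite there is nothing to prove, since $G$ acts geometrically on a bounded space and admits no proper rays to infinity, so semistability holds vacuously. If $G$ is one-ended, this is precisely Proposition~\ref{prop:OneEndedSemistable}. Thus the only remaining case is that $G$ has more than one end.

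Suppose $G$ has more than one end. By Stallings' theorem on ends of groups, $G$ splits nontrivially as an amalgamated product $A *_C B$ or an HNN extension over a finite subgroup $C$. Because $G$ is finitely presented, Dunwoody's accessibility theorem guarantees that iterating such splittings terminates after finitely many steps: $G$ is the fundamental group of a finite graph of groups in which every edge group is finite and every vertex group is either finite or one-ended. Each vertex group is a quasiconvex subgroup of $G$ and is therefore itself word hyperbolic. Consequently the finite vertex groups are semistable vacuously, while the one-ended vertex groups are semistable at infinity by Proposition~\ref{prop:OneEndedSemistable}.

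The reduction is completed by a preservation theorem of Mihalik and Tschantz: if a finitely presented group is the fundamental group of a finite graph of groups all of whose edge groups are finite and all of whose vertex groups are semistable at infinity, then the group itself is semistable at infinity. Applying this to the decomposition above yields semistability of $G$. Concretely, one forms a simply connected, cocompact $G$--space as a tree of spaces, gluing copies of the (simply connected) vertex spaces along the \emph{compact} edge spaces produced by the finiteness of the edge groups; a proper ray in this space eventually threads through a coherent sequence of vertex spaces, and one assembles a proper homotopy between two rays converging to the same end by pushing loops outward within each vertex space using its semistability and patching across the tree.

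The main obstacle is the last step. The difficulty is global rather than local: although each vertex group is semistable, a proper ray converging to a given end of $G$ may enter infinitely many vertex spaces, and the proper homotopies supplied piecewise by the vertex groups must be combined compatibly along the whole tree of spaces while remaining proper. Controlling this is exactly what the finiteness of the edge groups makes possible---the edge spaces are compact, so the tree of spaces is uniformly thin along its edges---and it is the substance of the Mihalik--Tschantz argument. The hyperbolic hypotheses enter only through accessibility and through Proposition~\ref{prop:OneEndedSemistable}, which furnishes the semistability of the vertex groups.
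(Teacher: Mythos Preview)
Your proposal is correct and follows essentially the same route as the paper: reduce to the one-ended case via Stallings plus Dunwoody accessibility (the paper packages this as Proposition~\ref{prop:Combination}), note that the vertex groups are hyperbolic and hence semistable by Proposition~\ref{prop:OneEndedSemistable}, and then invoke Mihalik's combination theorem for graphs of groups over finite edge groups. The only difference is one of detail: the paper, aiming to be self-contained, sketches the combination step explicitly by building a tree of spaces in which the gluing occurs along single \emph{points} fixed by the finite edge groups (Proposition~\ref{prop:FixedPoint}), which streamlines the monotonicity and patching argument you outline in your last two paragraphs.
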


This theorem follows from Proposition~\ref{prop:OneEndedSemistable} using a result of Mihalik that establishes semistability for graphs of groups over finite edge groups if the vertex groups are known to be semistable \cite{Mihalik_OneEnded}.
In order to keep our explanation relatively self-contained, we sketch a proof of Theorem~\ref{thm:Semistable} below.  In particular, we sketch Mihalik's reduction from the general case to the one-ended case in the setting of hyperbolic groups.

The first ingredient in the proof of this theorem provides a useful graph of groups decomposition by combining several well-known results.

\begin{prop}
\label{prop:Combination}
Let $G$ be a word hyperbolic group. Then $G$ is the fundamental group of a graph of groups whose edge groups are finite and whose vertex groups are either finite or one ended.
Furthermore, each vertex group $G_v$ is itself a hyperbolic group, and the inclusion $G_v \hookrightarrow G$ is a quasi-isometric embedding.
\end{prop}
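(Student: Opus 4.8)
The plan is to assemble the decomposition from two classical structure theorems and then verify the quasi-isometric embedding using the Bass--Serre tree. Since $G$ is word hyperbolic, it is finitely presented, and hence accessible by Dunwoody's accessibility theorem. Stallings' Ends Theorem states that a finitely generated group with more than one end splits nontrivially as an amalgamated product or an HNN extension over a finite subgroup; iterating this splitting on the resulting factors and invoking accessibility to guarantee that the process terminates produces a finite graph of groups decomposition of $G$ with finite edge groups in which every vertex group has at most one end. Thus each vertex group $G_v$ is either finite or one-ended, which gives the first assertion. (A two-ended group, having two ends, is split further and so does not occur as a vertex group in the terminal decomposition.)

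It remains to show that the inclusion $G_v \hookrightarrow G$ is a quasi-isometric embedding, from which the hyperbolicity of $G_v$ will follow. Let $T$ be the Bass--Serre tree associated to the decomposition, so that $G$ acts on $T$ with finite edge stabilizers and cocompact quotient, and let $v_0$ be a vertex of $T$ with $\Stab(v_0)=G_v$. Fixing finite generating sets, the orbit map $g\mapsto g\cdot v_0$ is $G$--equivariant and coarsely Lipschitz from $G$ to $T$. The key point is that, because the edge stabilizers are \emph{finite}, the edge spaces in the associated tree of spaces are uniformly bounded; a normal-form argument for elements of $G$ then shows that a geodesic of the Cayley graph joining two elements of $G_v$ stays within bounded distance of $G_v$, so that $G_v$ is undistorted. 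Equivalently, finite subgroups are quasi-convex, and the vertex groups of a splitting over quasi-convex subgroups are themselves quasi-convex.

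Finally, a quasi-isometrically embedded subgroup of a word hyperbolic group is quasi-convex, and a quasi-convex subgroup of a hyperbolic group is itself hyperbolic; applying this to each $G_v$ completes the proof. I expect the main obstacle to be the quasi-isometric embedding of the vertex groups: the existence of the decomposition is a direct citation of Stallings together with Dunwoody, and the hyperbolicity of each $G_v$ is then immediate from standard quasi-convexity theory, so the one step requiring genuine argument is controlling how far a geodesic of $G$ between two elements of $G_v$ can stray, which is precisely where the finiteness of the edge groups is used.
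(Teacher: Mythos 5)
Your proposal is correct and follows essentially the same route as the paper: Dunwoody's accessibility theorem (together with Stallings) yields the splitting over finite groups with finite or one-ended vertex groups, the vertex groups are then quasiconvex because the edge groups are finite, and the standard facts that quasiconvex subgroups of hyperbolic groups are hyperbolic and quasi-isometrically embedded finish the argument. The only difference is cosmetic: you sketch the normal-form/Bass--Serre justification for quasiconvexity that the paper simply asserts, and you derive hyperbolicity from the quasi-isometric embedding rather than deriving both from quasiconvexity directly.
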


\begin{proof}
Every hyperbolic group $G$ is finitely presented, so Dunwoody's accessibility theorem implies that $G$ splits over finite groups as described \cite{Dunwoody85}.
It follows that each vertex group $G_v$ is quasiconvex, in the sense that, any geodesic in the Cayley graph of $G$ joining two points of $G_v$ must lie uniformly close to $G_v$ itself.
Any quasiconvex subgroup of a hyperbolic group is hyperbolic and quasi-isometrically embedded.  See, for instance, Lemma III.$\Gamma$.3.5 and Proposition III.$\Gamma$.3.7 of \cite{BH99}.
\end{proof}

To build a cleaner geometric model space for a multi-ended hyperbolic group, we use the following construction.

\begin{prop}
\label{prop:FixedPoint}
Let $G$ be a word hyperbolic group.  There exists a simply connected $\delta$--hyperbolic simplicial complex $X$ on which $G$ acts geometrically such that each finite subgroup of $G$ has a fixed point in $X$.
\end{prop}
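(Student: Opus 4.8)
The plan is to take $X$ to be the Rips complex of $G$. Fix a finite generating set, so that the Cayley graph of $G$ is $\delta_0$--hyperbolic for some $\delta_0$, and for a parameter $d$ let $\Set{P}_d(G)$ be the simplicial complex with vertex set $G$ in which a finite subset spans a simplex precisely when its diameter in the word metric is at most $d$. Metrizing each simplex as a regular Euclidean simplex of unit edge length makes $\Set{P}_d(G)$ a geodesic space; since balls in $G$ are finite, this complex is locally finite, hence proper. Left translation gives a $G$--action that is free on the vertex set and has only finitely many orbits of simplices, so the action is geometric. The vertex inclusion $G \to \Set{P}_d(G)$ is a quasi-isometry, so $\Set{P}_d(G)$ is $\delta$--hyperbolic for a suitable $\delta$ by quasi-isometry invariance of hyperbolicity among geodesic spaces. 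Finally, by Rips's contractibility theorem (see \cite[Ch.~III.$\Gamma$]{BH99} or \cite{BestvinaMess91}) there is a threshold $d_0 = d_0(\delta_0)$ so that $\Set{P}_d(G)$ is contractible, hence simply connected, whenever $d \ge d_0$.

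The remaining point is to arrange, with a \emph{single} parameter $d$, that every finite subgroup has a fixed point. The key input is the standard fact that a hyperbolic group has only finitely many conjugacy classes of finite subgroups (see \cite[Ch.~III.$\Gamma$]{BH99}). Choosing one representative from each class, each is a finite set of group elements and so is contained in a common ball $\ball{e}{R}$. Consequently every finite subgroup $H \le G$ admits an element $g$ with $gHg^{-1} \subseteq \ball{e}{R}$. I would then note that the $H$--orbit of the vertex $g^{-1}$ is small: for $h,h' \in H$ we have $d\bigl(hg^{-1}, h'g^{-1}\bigr) = \abs{g\,h^{-1}h'\,g^{-1}} \le R$, since $g(h^{-1}h')g^{-1} \in gHg^{-1} \subseteq \ball{e}{R}$, so this orbit has diameter at most $R$.

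Now fix any $d \ge \max\{d_0, R\}$ and set $X = \Set{P}_d(G)$. For each finite subgroup $H$, the orbit $Hg^{-1}$ has diameter at most $d$, hence spans a simplex $\sigma$ of $X$. Left translation by elements of $H$ permutes the vertex set $Hg^{-1}$ of $\sigma$, so $H$ preserves $\sigma$ and therefore fixes its barycenter. This yields the required fixed point and completes the construction.

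The hard part is essentially the simultaneous calibration of constants: one must choose a single $d$ large enough to force both contractibility and hyperbolicity while also exceeding the uniform radius $R$ that controls all finite subgroups at once. The only genuinely nonelementary ingredients are Rips's contractibility theorem and the finiteness of conjugacy classes of finite subgroups; granting these, everything else reduces to the elementary bounded-orbit and barycenter argument above.
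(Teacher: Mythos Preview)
Your proof is correct and follows essentially the same approach as the paper: use the Rips complex $P_d(G)$ for $d$ large enough, invoke the finiteness of conjugacy classes of finite subgroups to get a uniform bound $R$, and observe that each finite subgroup then has an orbit of diameter at most $R\le d$ which spans an invariant simplex whose barycenter is fixed. You have simply filled in more of the standard details about the Rips complex than the paper does.
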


\begin{proof}
Let $d_S$ be the word metric for a finite generating set $S$.
Since a hyperbolic group has finitely many conjugacy classes of finite subgroups \cite[III.$\Gamma$.3.2]{BH99}, there exists a constant $R$ so that each finite subgroup has a conjugate of diameter less than $R$.  For sufficiently large $R$, each finite subgroup stabilizes a simplex in the Rips complex $X=P_R(G,d_S)$ and, thus, the subgroup has a fixed point in $X$.
\end{proof}

An entirely different proof of Proposition~\ref{prop:FixedPoint} is given by Lang, who shows that the injective hull of $(G,d_S)$ is a contractible complex satisfying the conclusion of the preceding result in \cite{Lang_InjectiveHull}.

We are now ready to sketch the proof of the theorem.

\begin{proof}[Proof of Theorem~\ref{thm:Semistable}]
Let the infinite hyperbolic group $G$ be the fundamental group of a graph of groups $\mathcal{G}$ with all edge groups finite and all vertex groups either finite or one-ended.
Let $T$ be the Bass--Serre tree for the given splitting.
For each one-ended vertex group $G_v$, let $X_v$ be a simply connected $\delta$--hyperbolic simplicial complex on which $G_v$ acts geometrically such that each finite subgroup of $G_v$ has a fixed point, as in Proposition~\ref{prop:FixedPoint}. We assume that these spaces and fixed points are chosen equivariantly.
For each edge $e$ of $T$ joining vertices $v$ and $w$, let $p_e \in X_v$ and $q_e \in X_w$ be the chosen points fixed by the edge group $G_e$.
The group $G$ acts geometrically on a $\delta$--hyperbolic space $X$ formed by gluing the spaces $\{X_v\}$ in the pattern of the tree $T$ as follows:  Start with the disjoint union of the spaces $X_v$ for $v \in T$, and for each edge of $T$, attach an edge with endpoints $p_e$ and $q_e$. We will call these new edges the $T$--edges of $X$.

We will see that any pair of rays in $X$ converging to the same end are properly homotopic.
Let $\pi\colon X \to T$ be the quotient map that collapses each space $X_v$ to a point, leaving only the $T$--edges.
Recall that a map is \emph{monotone} if the preimage of each point is a connected set.
We claim that each proper ray in $X$ is properly homotopic to a ray that projects to a monotone path in $T$.  Indeed, let $r$ be any ray of $X$ that is also an edge path, \emph{i.e.}, traversing each edge linearly. For each point $p$ in a closed $T$--edge, the complement $X\setminus\{p\}$ has exactly two components that we call \emph{halfspaces}, each of which is simply connected.
If $H$ is a halfspace not containing the basepoint $r(0)$, then any subpath of $r$ forming a loop in $\bar{H}$ based at $p$ can be contracted to a point within $\bar{H}$.
Thus we may assume that $r$ contains no such loop.  But a ray containing no such loop projects to a monotone path $\bar{r} = \pi\of r$ in $T$.

Either $\bar{r}$ is eventually constant (so that $r$ has a tail lying inside a single vertex space $X_v$) or $\bar{r}$ converges to an end of $T$ (so that $r$ travels through an infinite number of vertex spaces, but never visits any of them more than once).

If two rays with monotone projection converge to the same end of $X$, they must have the same type in $T$---either their projections to $T$ have tails that lie in the same vertex space $X_v$ or their projections converge to the same end of $T$.  In the first case, they are properly homotopic since, by Proposition~\ref{prop:OneEndedSemistable}, the space $X_v$ is semistable.
In the second case, they are properly homotopic, since each vertex space is simply connected.  
\end{proof}

\section{Linear connectivity}
\label{sec:LinearCon}

In this section, we prove two results due to Bonk--Kleiner on linear connectivity of boundaries and the existence of quasi-isometrically embedded hyperbolic planes in hyperbolic groups.  The proof of linear connectivity is a slight variation of the elementary proof of \cite[Prop.~3.2]{BestvinaMess91}.

\begin{defn}
A complete metric space $M$ is \emph{linearly connected} if there exists a constant $\nu<\infty$ such that each pair of points $x,x'\in M$ is contained in a compact, connected set of diameter less than $\nu\,d(x,x')$.
\end{defn}

Given two points $x,x'\in M$, a \emph{$\rho$--chain of length $N$ from $x$ to $x'$} is a sequence of points $x=b_0,b_1,\dots,b_N=x'$ such that $d(b_{j-1},b_j) \le \rho \, d(x,x')$ for each $j$.

\begin{prop}
\label{prop:GeometricSeries}
Let $M$ be a compact metric space. Suppose there are numbers $\rho<1$ and $N \in \N$ such that for each $x,x'\in M$ there is a $\rho$--chain of length $N$ from $x$ to $x'$.
Then $M$ is linearly connected.
\end{prop}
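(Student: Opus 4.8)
The plan is to join $x$ to $x'$ by a continuum built through repeated subdivision, with all estimates controlled by a geometric series in $\rho$. Write $d = d(x,x')$. Starting from the ordered pair $T_0 = (x,x')$, I would insert between each consecutive pair of points of $T_{k-1}$ a $\rho$--chain supplied by the hypothesis, retaining the old points as the endpoints of the inserted chains, and concatenate these in order to form a longer finite ordered sequence $T_k$. By construction the $T_k$ are nested, and each subdivision step shrinks every consecutive gap by a factor of $\rho$, so the \emph{mesh} of $T_k$ (the largest distance between consecutive entries) is at most $\rho^k d$. The candidate continuum is $A = \overline{\bigcup_k T_k}$; being closed in the compact space $M$, it is automatically compact.

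For the diameter I would establish the uniform bound that every point produced while subdividing a pair $(u,v)$ lies within a fixed multiple of $d(u,v)$ from $u$, and this is exactly where the name ``geometric series'' enters. Let $\alpha_k$ denote the supremum, over all pairs and over all points generated within $k$ levels of subdivision, of the ratio of (distance to the left endpoint) to (distance between the endpoints). A one-step estimate using the triangle inequality, together with the fact that the left endpoint of any inserted sub-chain is reached from $u$ across at most $N-1$ gaps each of length at most $\rho\,d(u,v)$, yields the recursion $\alpha_{k+1} \le (N-1)\rho + \rho\,\alpha_k$. Since $\rho<1$ this is a contraction whose fixed point is $(N-1)\rho/(1-\rho)$, so the $\alpha_k$ remain bounded (by $\max\{1,(N-1)\rho/(1-\rho)\}$) and the estimate passes to the limit over all generated points. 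Symmetrizing in $u$ and $v$ and applying the triangle inequality once more bounds $\diam A$ by a constant multiple of $d$, so any $\nu$ strictly larger than twice this constant works.

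The main obstacle is \emph{connectedness}: $\bigcup_k T_k$ is only a countable union of finite sets, and the closure of such a union need not be connected on its face. I would resolve this with the classical criterion that a compact metric space is connected if and only if it is \emph{well-chained}, meaning that for every $\epsilon>0$ any two of its points are joined by an $\epsilon$--chain lying inside the space. Well-chainedness of $A$ is forced by the vanishing mesh: given $p,q\in A$ and $\epsilon>0$, approximate $p$ and $q$ within $\epsilon/3$ by points of some $T_K$, pass to a common refinement $T_k$ with $k\ge K$ and $\rho^k d < \epsilon/3$ (which contains both approximants since the $T_k$ are nested), and read off the required chain as the segment of the ordered sequence $T_k$ between them, adjoining $p$ and $q$ at the two ends. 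The nontrivial direction of the criterion is the standard compactness argument that two disjoint closed sets in a compact space lie a positive distance apart, so a disconnection of $A$ would force some chain step to exceed $\epsilon$, a contradiction. Combining compactness, the diameter estimate, and connectedness exhibits, for each pair $x,x'$, a compact connected set of diameter at most $\nu\,d(x,x')$ containing both, which is precisely linear connectivity.
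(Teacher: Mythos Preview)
Your argument is correct and follows essentially the same construction as the paper: recursively insert $\rho$--chains, bound the diameter by a geometric series, and take the closure to obtain the required continuum. Your treatment is in fact more thorough than the paper's, which simply asserts that $\overline{\bigcup S_k}$ is connected without comment; you supply that step via the well-chained criterion, while the paper's accompanying remark instead suggests parametrizing by $N$--adic rationals and extending by uniform continuity.
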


\begin{proof}
Given $x,x' \in M$, let $S_1$ be a $\rho$--chain of length $N$ from $x$ to $x'$.
Define $S_{k}$ recursively by inserting a $\rho$--chain of length $N$ between each consecutive pair of points of $S_{k-1}$.
Then
\[
   \diam(S_k) \le 2N(\rho + \rho^2 + \cdots + \rho^k) \, d(x,x')
\]
holds by induction, since
\[
   \diam(S_1) \le N\rho\,d(x,x')
   \quad \text{and} \quad
   \diam(S_k) \le \diam(S_{k-1}) + 2N \rho^k\,d(x,x').
\]
Therefore, $S = \overline{\bigcup S_k}$ is a compact connected set of diameter at most $\nu \, d(x,x')$ for $\nu=2N\rho /(1-\rho)$ because $\rho < 1$.
\end{proof}

\begin{rem}
The proposition above also holds under the weaker hypothesis that $M$ is a complete metric space.
Indeed, the inclusion $\bigcup S_k \to M$ determines a map $[0,1] \cap \Z[1/N] \to M$, which is uniformly continuous by a slight variation of the above argument. By completeness, this map extends to a continuous path $I \to M$ from $x$ to $x'$ with image $S$.
Curiously, Hilbert's well-known construction of a space-filling curve may be recovered as a direct consequence: if chains $S_k$ are selected in the square $I^2$ according to Hilbert's recursive method, the result is a continuous surjection $I \to S=I^2$.
\end{rem}

\begin{thm}[Bonk--Kleiner]
\label{thm:LinearlyConnected}
If $G$ is any one-ended hyperbolic group, then $\boundary G$ is linearly connected with respect to any visual metric.
\end{thm}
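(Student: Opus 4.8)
The plan is to verify the hypotheses of Proposition~\ref{prop:GeometricSeries}. Fix a simply connected $\delta$--hyperbolic space $X$ on which $G$ acts geometrically, a basepoint $p$, and a visual metric $d_\infty$ with parameter $a$ on $\boundary X = \boundary G$, which is compact. It then suffices to produce constants $\rho < 1$ and $N \in \N$, independent of the chosen pair of boundary points, so that any $\xi,\xi' \in \boundary X$ are joined by a $\rho$--chain of length $N$ in $(\boundary X, d_\infty)$. The underlying idea is to start from the point where geodesic rays to $\xi$ and $\xi'$ nearly meet, push a short connecting path outward by a fixed number of steps via Lemma~\ref{lem:PushPaths}, and read off a boundary chain from the pushed-out path using Proposition~\ref{prop:Uniformity}.

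First I would fix the geometry at the ``meeting radius.'' Given $\xi,\xi'$, choose geodesic rays $c,c'$ from $p$ with $c(\infty)=\xi$ and $c'(\infty)=\xi'$, and set $s = (\xi | \xi')_p$. The visual metric gives $d_\infty(\xi,\xi') \ge k_1\,a^{-s}$, and Lemma~\ref{lem:TripodIdeal} gives $d\bigl(c(s),c'(s)\bigr)\le\delta$, so a geodesic segment of length at most $\delta$ joins $c(s)$ to $c'(s)$ in the complement of $\ball{p}{s-\delta}$. Applying Lemma~\ref{lem:PushPaths} a total of $t+\lceil\delta\rceil$ times, where $t$ is a constant to be fixed below, produces a path $\gamma$ from $c$ to $c'$ of length at most $\lceil\delta\rceil\,\lambda^{\,t+\lceil\delta\rceil}$ lying in the complement of $\ball{p}{s+t}$. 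Because $t$ is a fixed constant, both the length bound and the radius increment are independent of $\xi,\xi'$. I would then discretize $\gamma$ into unit steps $w_0,\dots,w_N$ with $w_0$ on $c$, $w_N$ on $c'$, and $d(w_{j-1},w_j)\le 1$; here $N \le \lceil\delta\rceil\,\lambda^{\,t+\lceil\delta\rceil}+1$ is a fixed constant, and every $w_j$ lies at radius at least $s+t$.

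Next I would convert this into a boundary chain and estimate visual distances. Using almost extendability, choose for each $j$ a ray $c_j$ from $p$ passing within $C$ of $w_j$ and set $\zeta_j = c_j(\infty)$, noting $\zeta_0=\xi$ and $\zeta_N=\xi'$ since the endpoints of $\gamma$ lie on $c$ and $c'$. For consecutive indices the rays $c_{j-1},c_j$ come within $2C+1$ of each other at parameters at least $s+t-C$, so a Gromov-product estimate on the triangle with vertex $p$ forces $d\bigl(c_{j-1}(r),c_j(r)\bigr)\le\delta$ at the common radius $r = s+t-2C-1$. Proposition~\ref{prop:Uniformity} then yields
\[
   d_\infty(\zeta_{j-1},\zeta_j) \le k_3\,a^{-r}
   = \bigl(k_3\,a^{\,2C+1}\bigr)\,a^{-t}\,a^{-s}
   \le \Bigl(\tfrac{k_3\,a^{\,2C+1}}{k_1}\,a^{-t}\Bigr)\,d_\infty(\xi,\xi').
\]
Setting $\rho = (k_3\,a^{\,2C+1}/k_1)\,a^{-t}$ and fixing $t$ large enough that $\rho<1$, the points $\zeta_0,\dots,\zeta_N$ form a $\rho$--chain of the fixed length $N$ from $\xi$ to $\xi'$, and Proposition~\ref{prop:GeometricSeries} gives linear connectivity.

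The delicate point, and the step I expect to require the most care, is the competition between the two effects of pushing outward. Each application of Lemma~\ref{lem:PushPaths} multiplies the path length by $\lambda$, so the chain length $N$ grows like $\lambda^t$, while the per-step visual distance shrinks like $a^{-t}$ relative to $d_\infty(\xi,\xi')$. Linear connectivity demands $\rho<1$ together with $N$ bounded, both uniformly in $\xi,\xi'$; this succeeds precisely because one may fix a single constant $t$ making $\rho<1$, after which $N=N(t)$ is automatically a constant. The supporting bookkeeping—ensuring every $w_j$ sits at radius $s+t+O(1)$ so that Proposition~\ref{prop:Uniformity} delivers the clean $a^{-(s+t)}$ decay—is where the constants must be tracked honestly.
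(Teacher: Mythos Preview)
Your argument is correct and follows essentially the same route as the paper's proof: use Lemma~\ref{lem:TripodIdeal} to get a short path near radius $s=(\xi|\xi')_p$, push it outward a fixed number $t$ of times via Lemma~\ref{lem:PushPaths}, extract rays through the unit-step discretization, and apply Proposition~\ref{prop:Uniformity} and the visual-metric lower bound to obtain a $\rho$--chain of uniformly bounded length with $\rho<1$. The only cosmetic differences are that you start with the geodesic segment $[c(s),c'(s)]$ directly rather than first invoking $(\ddag_\delta)$, and you descend to a slightly smaller common radius $r=s+t-2C-1$ to get the clean bound $d\bigl(c_{j-1}(r),c_j(r)\bigr)\le\delta$ instead of the paper's $2C+1+2\delta$; neither change affects the substance.
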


\begin{cor}[\cite{BestvinaMess91,Bowditch99Treelike,Levitt98,Swarup96}]
The boundary of a one-ended hyperbolic group is locally connected. \qed
\end{cor}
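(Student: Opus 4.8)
The plan is to realize $\boundary G$ as $\boundary X$ for a $\delta$--hyperbolic space $X$ on which $G$ acts geometrically (for instance a Cayley graph or Rips complex), equip $\boundary X$ with a visual metric $d_\infty$ of parameter $a$ and constants $k_1,k_2$, and verify the hypotheses of Proposition~\ref{prop:GeometricSeries} for this compact metric space. Since $G$ is one-ended, Corollary~\ref{cor:ddagHolds} supplies $(\ddag_M)$ for every $M$, so the push-paths estimate of Lemma~\ref{lem:PushPaths} is available. It therefore suffices to produce constants $\rho<1$ and $N\in\N$, independent of the chosen pair of boundary points, so that any $\xi,\xi'\in\boundary X$ are joined by a $\rho$--chain of length $N$; linear connectivity then follows from Proposition~\ref{prop:GeometricSeries}, and the concluding Corollary is immediate because every linearly connected space is locally connected.

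Fix $\xi,\xi'$, represent them by geodesic rays $c,c'$ from the basepoint $p$, and put $s=(\xi|\xi')_p$. The crucial point is that hyperbolicity forces $c$ and $c'$ to fellow travel out to radius $s$: Lemma~\ref{lem:TripodIdeal} gives $d\bigl(c(s),c'(s)\bigr)\le\delta$, so the geodesic $[c(s),c'(s)]$ is a path of length at most $\delta$ from $c$ to $c'$ in the complement of $B(p,s-\delta)$. I would then invoke Lemma~\ref{lem:PushPaths} a \emph{fixed} number $\lceil T+\delta\rceil$ of times (for a constant $T$ chosen below) to push this path out to the complement of $B(p,s+T)$. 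Because each application multiplies length by the universal constant $\lambda$ and the number of applications does not depend on the pair, the resulting path $\beta$ from $c$ to $c'$ has length at most $\delta\,\lambda^{\lceil T+\delta\rceil}=:N$, a bound uniform in $\xi,\xi'$. This is precisely where the fixed-exponent growth in Lemma~\ref{lem:PushPaths} is essential: a pair-dependent number of pushes would destroy the uniform bound on chain length.

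Next I would convert $\beta$ into a boundary chain. Subdividing $\beta$ into at most $N$ steps of length $\le 1$ gives points $w_0,\dots,w_N$ at radius $\ge s+T$ with $d(w_{i-1},w_i)\le 1$; using almost extendability, route a ray $\sigma_i$ from $p$ within distance $C$ of each $w_i$, with $\sigma_0=c$ and $\sigma_N=c'$, and set $\zeta_i=\sigma_i(\infty)$ so that $\zeta_0=\xi$ and $\zeta_N=\xi'$. For consecutive indices, $\sigma_{i-1}$ and $\sigma_i$ pass within $C$ of points lying at distance $\le 1$ and radius $\ge s+T$, and a routine Gromov-product computation (the value of the product on points of a common ray, two applications of Proposition~\ref{prop:GromovInequality}, and the extended product of Definition~\ref{def:ExtendedGromov}) yields $(\zeta_{i-1}|\zeta_i)_p\ge s+T-\theta$ for a constant $\theta=\theta(C,\delta)$; this is essentially Proposition~\ref{prop:Uniformity}. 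Hence
\[
   d_\infty(\zeta_{i-1},\zeta_i)\le k_2\,a^{-(s+T-\theta)}
   \qquad\text{while}\qquad
   d_\infty(\xi,\xi')\ge k_1\,a^{-s},
\]
so each step is at most $\tfrac{k_2}{k_1}\,a^{\theta}\,a^{-T}$ times $d_\infty(\xi,\xi')$. Choosing $T$ large enough makes this ratio a constant $\rho<1$, and $\zeta_0,\dots,\zeta_N$ is the desired $\rho$--chain of length $N$.

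The main obstacle is keeping \emph{both} constants uniform in the pair, and this is exactly what the fellow-traveling radius $s$ accomplishes: starting the push at radius $\approx s$ and advancing a \emph{fixed} distance $T$ lands at radius $s+T$, which makes the visual-metric comparison scale-invariant in $s$ and simultaneously caps the chain length. The remaining case is that of small $s$, say $s<\delta+1$, where the initial geodesic may run too close to $p$ for the push to begin; here, however, $d_\infty(\xi,\xi')\ge k_1 a^{-(\delta+1)}$ is bounded below, and since $\boundary X$ is compact and connected (as $G$ is one-ended) a finite-net argument produces a $\rho$--chain of uniformly bounded length for all such pairs. Enlarging $N$ to dominate both cases completes the verification of the hypotheses of Proposition~\ref{prop:GeometricSeries}.
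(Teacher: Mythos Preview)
Your argument is correct and follows essentially the same route as the paper: the corollary is deduced immediately from Theorem~\ref{thm:LinearlyConnected}, whose proof (start at radius $s=(\xi|\xi')_p$ where the rays are $\delta$--close, push outward a fixed number of times via Lemma~\ref{lem:PushPaths}, project to the boundary through almost extendability and Proposition~\ref{prop:Uniformity}, then invoke Proposition~\ref{prop:GeometricSeries}) matches yours almost step for step. Your separate treatment of the small-$s$ case is unnecessary---the ratio $\rho=(k_2/k_1)a^{\theta-T}$ and the chain length $N$ you obtain are already independent of $s$, so the main argument covers all pairs uniformly; the paper's version simply omits this case split and thereby avoids importing the connectedness of $\boundary X$ as an extra ingredient.
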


\begin{proof}[Proof of Theorem~\ref{thm:LinearlyConnected}]
Suppose $G$ acts geometrically on a $\delta$--hyperbolic space $X$.
Fix a basepoint $p \in X$, and choose a visual metric $d_\infty$ on $\boundary X$ with parameter $a$.
Choose distinct points $x,x' \in \boundary X$, and let $s = (x | x')_p$. If $c$ and $c'$ are any rays based at $p$ such that $c(\infty)=x$ and $c'(\infty)=x'$, then $d \bigl( c(s),c'(s) \bigr) \le \delta$ by Lemma~\ref{lem:TripodIdeal}.
Let $C$ be the almost extendability constant of $X$, and let $L=L(\delta)$ be the constant given by $(\ddag_M)$ using $M=\delta$.
Then there exists a path $\alpha$ of length at most $L$ from $c(s)$ to $c'(s)$ in the complement of the ball $B(p,s-C)$.

Let $m>0$ be a positive integer to be determined later, and let $\lambda$ be the constant from Lemma~\ref{lem:PushPaths}.
If we iterate the lemma $m$ times, we see that there exists a path $\beta$ of length at most $N=\lambda^m L$ from $c$ to $c'$ in the complement of the ball $B(p,s - C + m)$.
Fix a parametrization $\beta\colon [0,N] \to X$ so that $d\bigl( \beta(j-1),\beta(j) \bigr) \le 1$ for each $j=0,\cdots,N$.
Since $X$ is almost extendable, for each $j$ we may choose a ray $c_j$ containing a point $y_j$ such that $d\bigl( y_j,\beta(j) \bigr) <C$.  We will assume that $c_0=c$ and $c_L = c'$.
Then for each $j$ we have
\[
   d(p,y_j) \ge s-2C+m
   \qquad \text{and} \qquad
   d(y_{j-1},y_{j}) \le 2C + 1.
\]
If we let $z_j= c_j(s-2C+m)$, then we have $d(z_{j-1},z_j) \le 2C +1 + 2\delta$. Let $k_1$ be given by Definition~\ref{def:VisualMetric}, and let $k_3 = k_3(2C+1+2\delta)$ be the constant from Proposition~\ref{prop:Uniformity}.
Setting $b_j = c_j(\infty) \in \boundary X$, we have
\[
   d_\infty (b_{j-1},b_j )
   \le k_3 a^{-(s-2C+m)} 
   = \frac{k_3}{a^{m-2C}} \, a^{-(x| x')_p }
   \le \frac{k_3}{k_1 a^{m-2C}}\, d_\infty (x,x').
\]
In particular, there exists a $\rho$--chain of length $N$ from $x$ to $x'$ for $\rho=k_3 / ( k_1 a^{m-2C} )$.
We now fix a value for the parameter $m$ large enough that $a^m > k_3 a^{2C}/k_1$.
Then $\rho<1$, and $N=\lambda^m L$ does not depend on the choice of $x$ and $x'$.
Thus $(\boundary X,d_\infty)$ is linearly connected by Proposition~\ref{prop:GeometricSeries}.
\end{proof}

\begin{thm}[Bonk--Kleiner]
\label{thm:QuasiHyperbolic}
Let $G$ be any hyperbolic group that is not virtually free. Then $G$ contains a quasi-isometrically embedded hyperbolic plane.
\end{thm}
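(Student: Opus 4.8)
The plan is to reduce to the one-ended case and then to convert the linearly connected boundary into a metrically controlled arc whose geodesic hull inside the model space is a quasi-isometrically embedded copy of $\Hyp^2$. First I would dispose of the reduction. By Proposition~\ref{prop:Combination}, $G$ is the fundamental group of a finite graph of groups whose edge groups are finite and whose vertex groups are each finite or one-ended, with every vertex group hyperbolic and quasi-isometrically embedded in $G$. If every vertex group were finite, then $G$ would be the fundamental group of a finite graph of finite groups and hence virtually free, contrary to hypothesis; so at least one vertex group $G_v$ is one-ended. Since the inclusion $G_v \hookrightarrow G$ is a quasi-isometric embedding, any quasi-isometric embedding of $\Hyp^2$ into a space on which $G_v$ acts geometrically composes with it to produce one for $G$. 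Thus it suffices to treat a one-ended $G$, which I assume from now on, acting geometrically on a $\delta$--hyperbolic space $X$ with basepoint $p$ and a fixed visual metric $d_\infty$ of parameter $a$ on $\boundary X$ (Definition~\ref{def:VisualMetric}).

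Next I would produce a \emph{quasi-arc} in the boundary. For one-ended $G$ the space $\boundary X$ is a nondegenerate continuum: compact, connected, and with more than one point. In the visual metric it is also doubling, because $X$ is proper and the action is cocompact, and it is linearly connected by Theorem~\ref{thm:LinearlyConnected}; that is, any two points lie in a connected set whose diameter is at most a fixed multiple of their distance, which is precisely the bounded-turning condition. By a theorem of Tukia and V\"ais\"al\"a in quasisymmetric geometry, a complete, doubling metric space of bounded turning has the property that any two of its points are joined by a quasi-arc---a quasisymmetric image of an interval---with distortion bounded in terms of the doubling and bounded-turning constants alone. Applying this to two distinct points of $\boundary X$ yields a quasi-arc $\Lambda \subseteq \boundary X$.

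Given $\Lambda$, I would form its geodesic hull $Y = \bigcup \set{[\xi,\eta]}{\xi,\eta\in\Lambda,\ \xi\neq\eta}$, the union of all bi-infinite geodesics of $X$ whose two ideal endpoints lie on $\Lambda$, and argue that $Y$ with its induced metric is quasi-isometric to $\Hyp^2$ while the inclusion $Y \hookrightarrow X$ is a quasi-isometric embedding. The embedding statement is immediate from hyperbolicity, since $Y$ is quasiconvex. The identification of $Y$ with $\Hyp^2$ is the analogue for $\Lambda$ of the elementary fact that the convex hull of a nondegenerate arc in $\boundary \Hyp^2$ is quasi-isometric to $\Hyp^2$; to verify it I would coordinatize $Y$ by a parameter running along $\Lambda$ together with a depth coordinate recording distance to $p$, and compare distances in $Y$ with the corresponding distances in $\Hyp^2$. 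Here the visual-metric estimate $d_\infty \asymp a^{-(\cdot\mid\cdot)_p}$ (Definition~\ref{def:VisualMetric}), together with Lemma~\ref{lem:TripodIdeal} and the uniform comparison of Proposition~\ref{prop:Uniformity}, is exactly what converts the quasisymmetry of $\Lambda$ into the two-sided metric comparison required for a quasi-isometry.

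The genuinely analytic step---and the one that reaches beyond the coarse, combinatorial methods used elsewhere in this paper---is the extraction of the quasi-arc $\Lambda$. The linear connectivity furnished by Theorem~\ref{thm:LinearlyConnected} supplies connecting continua of controlled diameter, much as in the chain-refinement underlying Proposition~\ref{prop:GeometricSeries}, but promoting such a continuum to a genuinely quasisymmetric arc with constants independent of the chosen endpoints requires the quasisymmetric machinery of Tukia and V\"ais\"al\"a and makes essential use of the doubling property of $\boundary X$. Once $\Lambda$ is available, the hull construction and its identification with $\Hyp^2$, while technical, are a routine matter of the visual-metric bookkeeping already developed in Section~\ref{sec:Hyperbolic}; see \cite{BonkKleiner05} for the full argument.
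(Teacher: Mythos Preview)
Your reduction to the one-ended case is identical to the paper's, and your identification of the linearly connected, doubling boundary as the source of a quasi-arc is the same move the paper makes (the paper cites Bonk--Schramm explicitly for doubling and Mackay's simplification of Tukia for the quasi-arc).

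The divergence comes after the quasi-arc $\Lambda$ is in hand. The paper does not form the geodesic hull of $\Lambda$ inside $X$ and argue directly. Instead it notes that the boundary of a sector $S \subset \Hyp^2$ is an interval, so the quasi-arc furnishes a quasisymmetric embedding $\boundary S \hookrightarrow \boundary G$; it then invokes the Bonk--Schramm extension theorem \cite[Thms.~7.4 and 8.2]{BonkSchramm00} as a black box to promote this to a quasi-isometric embedding $S \hookrightarrow G$, and finishes with a diagonal/limiting argument (since $S$ contains hyperbolic discs of arbitrarily large radius) to obtain an embedding of the full plane.

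Your hull construction is morally the special case of the Bonk--Schramm extension theorem one needs here, and the proof of that theorem does proceed via exactly the kind of visual-metric coordinatization you sketch. But calling this step ``routine bookkeeping already developed in Section~\ref{sec:Hyperbolic}'' undersells it: the material in that section gives the one-sided comparisons of Proposition~\ref{prop:Uniformity} and Lemma~\ref{lem:TripodIdeal}, not the two-sided control needed to prove that $Y$ is quasi-isometric to $\Hyp^2$. You would in effect be reproving the extension theorem. Note also that the hull of an arc with two endpoints is naturally a half-plane or sector rather than all of $\Hyp^2$; you would still need either to observe that such a region is itself quasi-isometric to $\Hyp^2$, or to run the same diagonal argument the paper uses. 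Packaging the extension step as a citation, as the paper does, makes both of these issues disappear and keeps the proof to a paragraph.
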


The proof of Theorem~\ref{thm:QuasiHyperbolic} involves the following notions from metric analysis.  A metric space $M$ is \emph{doubling} if there is a constant $N$ such that each metric ball can be covered by at most $N$ balls of half the radius.
A nonconstant map $f\colon M \to M'$ between metric spaces is \emph{quasisymmetric} if there exists a homeomorphism $\eta\colon [0,\infty) \to [0,\infty)$ such that $d(x,y)\le t\,d(x,z)$ implies $d\bigl( f(x),f(y) \bigr) \le \eta(t) \, d\bigl( f(x),f(z) \bigr)$ for all $x,y,z \in M$ and all $t\ge 0$.
A \emph{quasi-arc} in $M$ is a quasisymmetric map $[0,1]\to M$.

Now that we have established the linear connectivity of the boundary, the proof of Theorem~\ref{thm:QuasiHyperbolic} is essentially the same as the proof in \cite{BonkKleiner05}.
It depends on several results not discussed in detail in this article.  The most difficult of these to establish is a delicate result on the existence of quasi-arcs, originally due to Tukia \cite{Tukia96}, for which a substantially simpler proof was later given by Mackay \cite{Mackay_Quasiarcs}.  The version proved by Mackay is more suited for direct use in the proof below.

\begin{proof}[Proof of Theorem~\ref{thm:QuasiHyperbolic}]
By Proposition~\ref{prop:Combination} the group $G$ splits as a graph of groups such that each vertex group is a quasi-isometrically embedded hyperbolic group with at most one end.
If all vertex groups were finite, then $G$ would be virtually free \cite[Thm.~7.3]{ScottWall79},
so at least one vertex group must be one ended.
Therefore, it suffices to prove the theorem when $G$ is one-ended.

The boundary of $G$ is linearly connected by Theorem~\ref{thm:LinearlyConnected}.  A theorem of Bonk--Schramm states that the boundary of any word hyperbolic group is doubling \cite[Thm.~9.2]{BonkSchramm00}.
In a doubling, linearly connected, compact metric space, any two points can be joined by a quasi-arc (\cite{Mackay_Quasiarcs}).
If $S$ is a sector of the hyperbolic plane, we thus have a quasisymmetric embedding of Gromov boundaries $\boundary S \to [0,1] \to \boundary G$.
Since $\boundary S$ is connected, the quasi\-symmetric embedding extends to a quasi-isometric embedding $S \to G$ as explained in \cite[Thms.\ 7.4 and~8.2]{BonkSchramm00}.
Since $S$ contains hyperbolic discs of arbitrarily large radius, a diagonal argument gives a quasi-isometric embedding of the entire hyperbolic plane into $G$ (see \cite[Lem.~II.9.34]{BH99}).
\end{proof}

\bibliographystyle{alpha}
\bibliography{hyperbolic.bib}

\begin{thebibliography}{Bow99b}

\bibitem[BH99]{BH99}
Martin~R. Bridson and Andr\'{e} Haefliger.
\newblock {\em Metric spaces of non-positive curvature}, volume 319 of {\em
  Grundlehren der mathematischen Wissenschaften}.
\newblock Springer-Verlag, Berlin, 1999.

\bibitem[BK05]{BonkKleiner05}
Mario Bonk and Bruce Kleiner.
\newblock Quasi-hyperbolic planes in hyperbolic groups.
\newblock {\em Proc. Amer. Math. Soc.}, 133(9):2491--2494, 2005.

\bibitem[BM91]{BestvinaMess91}
Mladen Bestvina and Geoffrey Mess.
\newblock The boundary of negatively curved groups.
\newblock {\em J. Amer. Math. Soc.}, 4(3):469--481, 1991.

\bibitem[Bow99a]{Bowditch99Boundaries}
B.H. Bowditch.
\newblock Boundaries of geometrically finite groups.
\newblock {\em Math.\ Z.}, 230(3):509--527, 1999.

\bibitem[Bow99b]{Bowditch99Connectedness}
B.H. Bowditch.
\newblock Connectedness properties of limit sets.
\newblock {\em Trans.\ Amer.\ Math.\ Soc.}, 351(9):3673--3686, 1999.

\bibitem[Bow99c]{Bowditch99Treelike}
B.H. Bowditch.
\newblock Treelike structures arising from continua and convergence groups.
\newblock {\em Mem.\ Amer.\ Math.\ Soc.}, 139(662):1--86, 1999.

\bibitem[Bow01]{Bowditch01}
B.H. Bowditch.
\newblock Peripheral splittings of groups.
\newblock {\em Trans. Amer. Math. Soc.}, 353(10):4057--4082, 2001.

\bibitem[Bow12]{BowditchRelHyp}
B.H. Bowditch.
\newblock Relatively hyperbolic groups.
\newblock {\em Internat. J. Algebra Comput.}, 22(3):1250016, 66, 2012.

\bibitem[BS00]{BonkSchramm00}
M.~Bonk and O.~Schramm.
\newblock Embeddings of {G}romov hyperbolic spaces.
\newblock {\em Geom. Funct. Anal.}, 10(2):266--306, 2000.

\bibitem[BS07]{BuyaloSchroeder07}
Sergei Buyalo and Viktor Schroeder.
\newblock {\em Elements of asymptotic geometry}.
\newblock EMS Monographs in Mathematics. European Mathematical Society,
  Z\"{u}rich, 2007.

\bibitem[CDP90]{CDP90}
M.~Coornaert, T.~Delzant, and A.~Papadopoulos.
\newblock {\em G\'{e}om\'{e}trie et th\'{e}orie des groupes: {L}es groupes
  hyperboliques de {G}romov}, volume 1441 of {\em Lecture Notes in
  Mathematics}.
\newblock Springer-Verlag, Berlin, 1990.

\bibitem[Dun85]{Dunwoody85}
M.J. Dunwoody.
\newblock The accessibility of finitely presented groups.
\newblock {\em Invent. Math.}, 81(3):449--457, 1985.

\bibitem[GH90]{GhysHarpe90}
{\'E}.~Ghys and P.~de~la Harpe.
\newblock {\em Sur les groupes hyperboliques d'apr\`es {M}ikhael {G}romov}.
\newblock Birkh\"auser Boston Inc., Boston, MA, 1990.

\bibitem[GO07]{GeogheganOntaneda07}
Ross Geoghegan and Pedro Ontaneda.
\newblock Boundaries of cocompact proper {${\rm CAT}(0)$} spaces.
\newblock {\em Topology}, 46(2):129--137, 2007.

\bibitem[Gro81]{Gromov81}
M.~Gromov.
\newblock Hyperbolic manifolds, groups and actions.
\newblock In I.~Kra and B.~Maskit, editors, {\em Riemann surfaces and related
  topics: {P}roceedings of the 1978 {S}tony {B}rook {C}onference
  \textup{(}{S}tate {U}niv. {N}ew {Y}ork, {S}tony {B}rook, {N}{Y},
  1978\textup{)}}, volume~97 of {\em Ann. of Math. Stud.}, pages 183--213.
  Princeton Univ. Press, Princeton, NJ, 1981.

\bibitem[Gro87]{Gromov87}
M.~Gromov.
\newblock Hyperbolic groups.
\newblock In S.M. Gersten, editor, {\em Essays in group theory}, pages 75--263.
  Springer, New York, 1987.

\bibitem[HR]{HruskaRuane_Semistable}
G.~Christopher Hruska and Kim Ruane.
\newblock An introduction to semstability in geometric group theory.
\newblock arXiv:1904.12947.

\bibitem[KMN06]{KarlssonMetzNoskov_06}
A.~Karlsson, V.~Metz, and G.A. Noskov.
\newblock Horoballs in simplices and {M}inkowski spaces.
\newblock {\em Int. J. Math. Math. Sci.}, pages 1--20, 2006.
\newblock Article ID 23656.

\bibitem[Kra77]{Krasinkiewicz77}
J\'{o}zef Krasinkiewicz.
\newblock Local connectedness and pointed {$1$}--movability.
\newblock {\em Bull. Acad. Polon. Sci. S\'{e}r. Sci. Math. Astronom. Phys.},
  25(12):1265--1269, 1977.

\bibitem[Kur66]{Kuratowski_VolI}
K.~Kuratowski.
\newblock {\em Topology. {V}ol. {I}}.
\newblock Academic Press, New York-London; Pa\'{n}stwowe Wydawnictwo Naukowe
  Polish Scientific Publishers, Warsaw, 1966.

\bibitem[Lan13]{Lang_InjectiveHull}
Urs Lang.
\newblock Injective hulls of certain discrete metric spaces and groups.
\newblock {\em J. Topol. Anal.}, 5(3):297--331, 2013.

\bibitem[Lev98]{Levitt98}
G.~Levitt.
\newblock Non-nesting actions on real trees.
\newblock {\em Bull. London Math. Soc.}, 30(1):46--54, 1998.

\bibitem[Mac08]{Mackay_Quasiarcs}
John~M. Mackay.
\newblock Existence of quasi-arcs.
\newblock {\em Proc. Amer. Math. Soc.}, 136(11):3975--3981, 2008.

\bibitem[Mih83]{Mihalik83}
Michael~L. Mihalik.
\newblock Semistability at the end of a group extension.
\newblock {\em Trans. Amer. Math. Soc.}, 277(1):307--321, 1983.

\bibitem[Mih87]{Mihalik_OneEnded}
Michael~L. Mihalik.
\newblock Semistability at {$\infty$}, {$\infty$}--ended groups and group
  cohomology.
\newblock {\em Trans. Amer. Math. Soc.}, 303(2):479--485, 1987.

\bibitem[SW79]{ScottWall79}
Peter Scott and Terry Wall.
\newblock Topological methods in group theory.
\newblock In C.T.C. Wall, editor, {\em Homological group theory
  \textup{(}{P}roc. {S}ympos., {D}urham, 1977\textup{)}}, volume~36 of {\em
  London Math. Soc. Lecture Note Ser}, pages pp 137--203. Cambridge Univ.
  Press, Cambridge-New York, 1979.

\bibitem[Swa96]{Swarup96}
G.A. Swarup.
\newblock On the cut point conjecture.
\newblock {\em Electron.\ Res.\ Announc.\ Amer.\ Math.\ Soc.}, 2(2):98--100,
  1996.

\bibitem[Tuk96]{Tukia96}
Pekka Tukia.
\newblock Spaces and arcs of bounded turning.
\newblock {\em Michigan Math. J.}, 43(3):559--584, 1996.

\end{thebibliography}

\end{document}